\documentclass[11pt]{amsart}
\usepackage{txfonts}
\usepackage{amsmath,amssymb,amsthm,mathtools}
\usepackage{enumitem}
\usepackage[
  letterpaper,
  left=0.72in,
  right=0.82in,
  top=0.72in,
  bottom=1.08in
]{geometry}

\theoremstyle{plain}

\numberwithin{equation}{section}

\newtheorem{theorem}{Theorem}[section]
\newtheorem{proposition}[theorem]{Proposition}
\newtheorem{lemma}[theorem]{Lemma}

\theoremstyle{remark}
\newtheorem{remark}{Remark}[section]

\newcommand{\dd}{\,\mathrm{d}}
\newcommand{\diver}{\operatorname{div}}

\title[Ball and Spherical-Shell Rigidity] {\textbf{Ball and Spherical-Shell Rigidity from overdetermined Translating solitons}}

\author{Liang Cheng, Li Ma}

\address{Liang Cheng, School of Mathematics and Statistics, and Key Laboratory of Nonlinear Analysis \& Applications (Ministry of Education),\\
Central China Normal University, Wuhan, 430079, P.R. China}
\email{chengliang@ccnu.edu.cn}

\address{Li Ma, School of Mathematics and Physics \\
University of Science and Technology Beijing\\
Xueyuan Road 30, Haidian\\
Beijing 100083 \\
China}
\email{lma17@ustb.edu.cn}

\date{}

\begin{document}

\maketitle

\pagestyle{plain}

\begin{abstract}
We study overdetermined boundary problems for the graphical translating-soliton equation
\[
-\operatorname{div}\!\left(\frac{Du}{\sqrt{1+|Du|^2}}\right)
=\frac{1}{\sqrt{1+|Du|^2}}
\quad\text{in }\Omega,
\qquad
\partial_\nu u=\Gamma H+C
\quad\text{on }\partial\Omega,
\]
where $\Gamma, C$ are constants, $H$ is the mean curvature of the boundary $\partial\Omega$ of the regular bounded domain in $R^n$ such that $H_{\partial B_R}=-1/R$.  For $\Gamma\geq0$, we prove that constant
Dirichlet data force a bounded domain to be a ball.  We also prove a
spherical-shell rigidity theorem for a doubly connected domain with two
ordered boundary heights and $a<u<b$ in the interior.  The argument combines
linearization under reflection, Reichel's critical-plane and annular
continuation principles, curvature comparison, Serrin's corner lemma, and a
radial ODE that excludes the annular alternative in the one-height problem.
Finally, we give explicit counterexamples showing the sharpness of the sign,
ordering, connectedness, and nesting assumptions.
\end{abstract}

\medskip
\noindent\textbf{Keywords.} Overdetermined boundary value problem; translating soliton; moving planes; mean curvature; spherical rigidity; spherical shell regidity.

\section{Introduction}
Our intention here is to find ball and ring rigidity from the translating soliton equation from the mean curvature flow, widely studied in geometric analysis and nonlinear partial differential equations. We need to impose on overdetermined boundary conditions of the elliptic equations.
Overdetermined elliptic problems prescribe more boundary data than are normally
required for well-posedness. Their solvability therefore imposes strong
restrictions on the geometry of the underlying domain. The classical model is
Serrin's torsion problem: if
\[
-\Delta v=1\quad\text{in }\Omega,
\qquad
v=0,\quad \partial_\nu v=\text{constant}
\quad\text{on }\partial\Omega,
\]
then $\Omega$ is a ball \cite{Serrin1971}. The moving-plane method developed in
that setting, and subsequently extended in many directions
\cite{GidasNiNirenberg1979}, remains one of the principal tools for proving
spherical rigidity. The recent development of the method of moving planes may be found in \cite{CLL}. The equation under study here is the nonlinear translator equation
\begin{equation}\label{eq:translator}
-\diver\left(\frac{Du}{\sqrt{1+|Du|^2}}\right)
=
\frac{1}{\sqrt{1+|Du|^2}}
\quad\text{in }\Omega.
\end{equation}
Its solutions are graphical translating solitons for mean curvature flow, with
the sign convention corresponding to downward unit-speed translation; see, for
example, \cite{HoffmanIlmanenMartinWhite2019}. We assume that the boundary of the domain is smooth. We impose the boundary conditions
\begin{equation}\label{eq:boundary}
 u=a,
 \qquad
 \partial_\nu u=\Gamma H+C
 \quad\text{on }\partial\Omega,
\end{equation}
where $a,\Gamma,C$ are constants and the boundary mean curvature is normalized
so that
\begin{equation}\label{eq:Hconvention}
H=-\frac{1}{n-1}\diver_{\partial\Omega}\nu,
\qquad
H_{\partial B_R}=-\frac1R.
\end{equation}

The second condition in \eqref{eq:boundary} is a curvature-response law that
couples the nonlocal Dirichlet-to-Neumann output of the translator equation to
the local extrinsic geometry of the boundary.  It differs from the classical
Serrin condition in two ways. First, the governing equation is quasilinear. Second,
the normal slope is not constant but is allowed to react linearly to the mean
curvature of the boundary. The sign of this coupling is decisive. With the
convention \eqref{eq:Hconvention}, the moving-plane curvature comparison has the
correct sign when $\Gamma\geq0$. The curvature-contact and orthogonal-corner mechanisms are closely related to those developed by Niebel for an exterior harmonic problem with mean-curvature-dependent Neumann data \cite{NiebelExterior2026}.

\subsection{Main results}
\label{subsec:main-results}

The paper contains two main rigidity theorems. The first concerns a single
Dirichlet height on the whole boundary of the domain $\Omega$ and characterizes balls.  The second
concerns a doubly connected region
$
\Omega=\Omega_{\rm out}\setminus\overline{\Omega_{\rm in}}
$
whose outer and inner boundary components carry two different constant
heights.  In that setting, the strict ordering of the solution between the two
boundary values supplies the sign information required by Reichel's annular
moving-plane theorem \cite[Theorem~2]{Reichel1995}.  We use the topological
continuation part of that theorem as a black box and verify explicitly that the
reflected translator differences satisfy all of its analytic hypotheses.  The
curvature law then replaces the locally constant Neumann data only at the two
terminal critical contacts.  We now state the two rigidity results proved in this paper. 

\begin{theorem}[Spherical rigidity]\label{thm:main}
Let $n\geq2$ and $\alpha\in(0,1)$. Let $\Omega\subset\mathbb{R}^n$ be a bounded
connected domain with $\partial\Omega$ of class $C^{4,\alpha}$; the boundary is
not assumed to be connected. Suppose that
$
u\in C^{4,\alpha}(\overline\Omega)
$
solves
\begin{equation}\label{eq:mainproblem}
\begin{cases}
-\displaystyle\diver\left(\dfrac{Du}{\sqrt{1+|Du|^2}}\right)
=\displaystyle\dfrac{1}{\sqrt{1+|Du|^2}}
&\text{in }\Omega,\\[3mm]
 u=a \ \text{on }\partial\Omega,\ \ \ \ 
 \partial_\nu u=\Gamma H+C&\text{on }\partial\Omega,
\end{cases}
\end{equation}
where $a,C\in\mathbb{R}$ and $\Gamma\geq0$ are constants and $H$ is defined by
\eqref{eq:Hconvention}. Then there exist $x_0\in\mathbb{R}^n$ and $R>0$ such
that
$
\Omega=B_R(x_0).
$
Moreover, $u$ is radial about $x_0$.
\end{theorem}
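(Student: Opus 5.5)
We would argue by the method of moving planes in the version adapted by Reichel to domains whose boundary need not be connected. Since the boundary may be disconnected, the first step is to control the sign of $u-a$ in $\Omega$. The equation can be written as $Q(u)=-\sqrt{1+|Du|^2}\,\diver(Du/\sqrt{1+|Du|^2})=1>0$, so $u$ is a strict supersolution of the minimal surface operator. The comparison principle against the constant $a$, together with the strong maximum principle, then gives $u>a$ in $\Omega$. Hopf's lemma gives $\partial_\nu u<0$ on $\partial\Omega$ for the outer normal $\nu$.

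With this sign information, fix a direction $e$ and move the hyperplanes $T_\lambda=\{x\cdot e=\lambda\}$ in from $+\infty$. Let $\Sigma_\lambda$ be the cap and $u_\lambda$ the reflection of $u$ across $T_\lambda$. The difference $w=u_\lambda-u$ satisfies a linear uniformly elliptic equation $L w=0$ on $\Sigma_\lambda$; we obtain it by subtracting the two divergence-form equations and integrating the coefficients along the segment between $Du$ and $Du_\lambda$. The zeroth-order part vanishes, because the right side $(1+|Du|^2)^{-1/2}$ depends only on the gradient, so its difference becomes a first-order term. On $\partial\Sigma_\lambda\cap\partial\Omega$ we have $w=u_\lambda-a\ge 0$, since $u\ge a$ wherever the reflected point lies in $\overline\Omega$. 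Here we need Reichel's formulation: because $\Omega$ may have holes, we use the condition that the reflected cap stays inside $\Omega$. The maximum principle then gives $w>0$ as long as the reflected cap remains in $\Omega$.

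The plan is to use Reichel's critical-plane principle to continue until a critical position $\lambda_*$. At that position one of two things happens. Either (i) the reflected boundary is internally tangent to $\partial\Omega$ at a point $p\notin T_{\lambda_*}$, or (ii) $T_{\lambda_*}$ is orthogonal to $\partial\Omega$ at some point $q$. In case (i), the reflected boundary lies inside $\overline\Omega$ and touches at $p$. Curvature comparison gives $H_{\text{refl}}(p)\ge H(p)$ in the sign convention \eqref{eq:Hconvention}; we must check the orientation carefully. Since $\Gamma\ge0$, this yields $\partial_\nu u_\lambda(p)=\Gamma H_{\text{refl}}+C\ge \partial_\nu u(p)$, so $\partial_\nu w(p)\ge 0$. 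Hopf's lemma for $w>0$ with $w(p)=0$ demands $\partial_\nu w(p)<0$, which is a contradiction unless $w\equiv0$. In case (ii), the Neumann data are no longer constant, so we must redo Serrin's corner lemma. We need to show that $w$ vanishes to second order at $q$. This requires matching the first and second derivatives of $u$ and $u_\lambda$ at $q$, using the equation, the Dirichlet condition, and the tangential derivative of the curvature law. The relevant facts are that $H$ and its tangential derivatives are reflection-invariant at an orthogonal point, and that $\partial\Omega$ is of class $C^{4,\alpha}$, which is why $C^{4,\alpha}$ regularity is assumed. Serrin's corner lemma then forces $w\equiv0$. In both cases $\Omega$ is symmetric about $T_{\lambda_*}$ for every direction $e$.

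The main obstacle is that symmetry in every direction does not by itself exclude an annulus, which is possible since $\partial\Omega$ is not assumed connected. Reichel's annular continuation only gives that $\Omega$ is a ball or a radially symmetric annulus $B_{R_2}\setminus\overline{B_{R_1}}$. To rule out the annulus, we write $u=u(r)$. The equation becomes the ODE $-(r^{n-1}\phi(u'))'=r^{n-1}/\sqrt{1+u'^2}$ with $\phi(s)=s/\sqrt{1+s^2}$, and $u=a$ at both $r=R_1$ and $r=R_2$. Since $u>a$ inside, $u$ has an interior maximum at some $r_0$ and $u'$ changes sign once. The inner Neumann condition reads $\partial_\nu u=-u'(R_1)=\Gamma H_{\rm in}+C$. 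On the inner sphere the outward normal of $\Omega$ points toward the center, so $H_{\rm in}=+1/R_1$. The outer condition reads $u'(R_2)=-\Gamma/R_2+C$. We would integrate the ODE from $r_0$ to each endpoint and compare $|\phi(u'(R_1))|$ with $|\phi(u'(R_2))|$. Because $\Gamma\ge0$, the boundary laws force $-u'(R_1)-u'(R_2)=\Gamma(1/R_1+1/R_2)\ge0$, that is, $u'(R_1)\le -u'(R_2)$ in magnitude. On the other hand, the integrated ODE gives $R_2^{n-1}|\phi(u'(R_2))|>R_1^{n-1}\phi(u'(R_1))$. We would then refine this, using the weight $r^{n-1}$ and the dependence of the positive source on $r$, into a strict inequality $u'(R_1)>|u'(R_2)|$, which contradicts the previous one. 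This final inequality is the delicate point. If the direct integral comparison fails, we would instead compare both ends with the solution on the full ball and use monotonicity. Once the annulus is excluded, $\Omega=B_R(x_0)$, and radial symmetry of $u$ follows from the symmetry in every direction.
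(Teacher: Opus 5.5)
Your overall architecture matches the paper's: positivity $u>a$, reflected differences solving a linear equation with no zero-order term, curvature comparison at internal tangency, Serrin's corner lemma at orthogonal contact, rotational symmetry, and a radial ODE to rule out the annulus. However, the decisive step, the exclusion of the concentric annulus, is not proved.

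You need the strict inequality $\alpha>\beta$ for $\alpha=U'(r)$ and $\beta=-U'(R)$. Your weighted-flux comparison from the maximum point $r_0$ cannot deliver it. Those identities use only the ODE and $U'(r_0)=0$, and these hold for \emph{every} outer radius beyond $r_0$. But the sign of $\alpha-\beta$ depends on where $R$ sits: for the solution with $U'(r)=\alpha$, $\beta$ is small when $R$ is just past $r_0$, and $\beta\to\infty$ as $R\to\infty$. What pins $R$ is the equal-height condition $\int_r^R U'\,\dd\rho=0$, and it has to enter quantitatively.

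The paper does this in three steps:
\begin{enumerate}[label=\textup{(\arabic*)}]
\item Write the ODE as $p'=-(1+p^2)\bigl(1+\tfrac{n-1}{\rho}p\bigr)$.
\item Show, via $h=p+\rho/(n-1)$, that $1+\tfrac{n-1}{\rho}p>0$ on $[r,R]$, so $p$ is strictly decreasing.
\item Change variables $s=|p|$ on the two monotone branches. The positive and negative areas of $p$ become
\[
\int_0^{\alpha}\frac{s\,\dd s}{(1+s^2)\bigl(1+\tfrac{n-1}{\rho_+(s)}s\bigr)}
\quad\text{and}\quad
\int_0^{\beta}\frac{s\,\dd s}{(1+s^2)\bigl(1-\tfrac{n-1}{\rho_-(s)}s\bigr)}.
\]
\end{enumerate}
The first density lies strictly below $s/(1+s^2)$ and the second strictly above it. Equality of the areas therefore forces $\alpha>\beta$, which contradicts $\beta-\alpha=\Gamma(1/r+1/R)\ge0$.

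The orthogonal-contact step also fails as planned. The derivative of $H$ in the direction normal to the critical plane is not reflection-invariant (it changes sign under the reflection), and nothing forces it to vanish. Take coordinates with $Q=0$, the cap in $\{x_1<0\}$, and $\nu(Q)=e_n$. The one-sided inclusion $\varphi(x_1,\xi)\le\varphi(-x_1,\xi)$ for $x_1<0$ gives only $\varphi_{111}(0)\ge0$ and $\sum_j\varphi_{1jj}(0)\ge0$, that is, $H_1(Q)\ge0$. Since $u_{1n}(Q)=\partial_1(\partial_\nu u)(Q)=\Gamma H_1(Q)$, you cannot show that $w$ vanishes to second order when $\Gamma>0$.

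What you do get is $w_s(Q)=0$ and $w_{ss}(Q)=w_{1n}(Q)=-2\Gamma H_1(Q)\le0$ along the inward bisector $s$. This one-sided sign already contradicts the equality case of the corner lemma. You still have to check that $\chi(Q)=0$ and that its edge-tangential derivatives vanish; the paper gets this from $(A_{\lambda_*})_{1j}=0$ on the plane.

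Finally, in the internal-tangency case your curvature inequality is reversed. With $H_{\partial B_R}=-1/R$, a reflected piece lying inside $\Omega$ satisfies $H_{\rm refl}(p)\le H(p)$. The contradiction comes from applying Hopf at the preimage $q$, where $w>0$:
\[
0>\partial_{\nu(q)}w(q)=\Gamma\bigl(H(p)-H(q)\bigr)\ge0.
\]
As written, you have two sign errors that happen to cancel: the reversed inequality, and Hopf applied at $p$, which lies on the reflected side where $u_\lambda-u\le0$.
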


This result
allows an arbitrary number of boundary components at the outset and shows that
the one-height overdetermined problem nevertheless forces the domain to be a
ball.

\begin{remark}\label{rem:opposite-convention}
If one uses the opposite convention $H_{\partial B_R}=+1/R$, the corresponding
sign condition is $\Gamma\leq0$.
\end{remark}

\begin{remark}
The proof below does not assert rigidity for $\Gamma<0$. In that regime the
curvature comparison at the first contact point has the wrong sign, and the
annular calculation in Section~\ref{sec:annulus} shows that negative curvature
coefficients are naturally compatible with radial shells.
\end{remark}

The second theorem treats a genuinely doubly connected domain whose outer and
inner boundary components carry distinct constant heights.  Its strict
interior ordering is essential: the translator equation excludes interior
minima but does not, by itself, exclude interior maxima.

\begin{theorem}[Spherical-shell rigidity]\label{thm:shell}
Let $n\geq2$ and $\alpha\in(0,1)$. Let $\Omega_{\rm out}$ and
$\Omega_{\rm in}$ be bounded domains of class $C^{4,\alpha}$ with connected
boundaries such that
\[
\overline{\Omega_{\rm in}}\subset\Omega_{\rm out},
\qquad
\Omega=\Omega_{\rm out}\setminus\overline{\Omega_{\rm in}}
\]
is connected. Set
$
\Gamma_1=\partial\Omega_{\rm out},
\ 
\Gamma_2=\partial\Omega_{\rm in},
$
and let $\nu$ denote the outer unit normal of $\Omega$. Suppose that
$u\in C^{4,\alpha}(\overline\Omega)$ solves
\begin{equation}\label{eq:twoheight-problem}
\begin{cases}
-\displaystyle\diver\left(\dfrac{Du}{\sqrt{1+|Du|^2}}\right)
=\displaystyle\dfrac{1}{\sqrt{1+|Du|^2}}
&\text{in }\Omega,\\[3mm]
 u=a&\text{on }\Gamma_1,\\
 u=b&\text{on }\Gamma_2,\\
 \partial_\nu u=c_1H+C_1&\text{on }\Gamma_1,\\
 \partial_\nu u=c_2H+C_2&\text{on }\Gamma_2,
\end{cases}
\end{equation}
where $a,b,C_1,C_2\in\mathbb{R}$, $a<b$, and
\[
c_1\geq0,
\qquad
c_2\geq0.
\]
Assume in addition that
\begin{equation}\label{eq:strict-ordering}
 a<u<b\qquad\text{in }\Omega.
\end{equation}
Then there exist $x_0\in\mathbb{R}^n$ and $0<r<R$ such that
\[
\boxed{
\Omega=B_R(x_0)\setminus\overline{B_r(x_0)}.
}
\]
Moreover, $u$ is radial about $x_0$.
\end{theorem}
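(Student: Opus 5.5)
The plan is to run Reichel's annular moving-plane scheme \cite[Theorem~2]{Reichel1995} for the translator equation. The curvature law enters only at the terminal critical positions.

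Fix a direction $e\in S^{n-1}$. For $\lambda$ close to $\max_{\overline{\Omega_{\rm out}}}x\cdot e$, let $T_\lambda=\{x\cdot e=\lambda\}$ and let $x^\lambda$ be the reflection of $x$ across $T_\lambda$. Set $\Sigma_\lambda=\{x\in\Omega:\ x\cdot e>\lambda\}$ and $w_\lambda(x)=u(x^\lambda)-u(x)$ on the reflected cap $\Sigma_\lambda'$. Reichel's annular formulation is adapted exactly to the two-height situation: the cap of $\Omega_{\rm out}$ is reflected inside $\Omega_{\rm out}$, while the reflection of $\Omega_{\rm in}$ must be tracked so that it stays outside the reflected cap.

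\textbf{Step 1 (linearization).} The operator $Q(u)=\diver\big(Du/\sqrt{1+|Du|^2}\big)+(1+|Du|^2)^{-1/2}$ is invariant under reflections. Hence $w_\lambda$ satisfies a linear equation $L_\lambda w_\lambda=a^{ij}\partial_{ij}w_\lambda+b^i\partial_iw_\lambda=0$. Its coefficients come from integrating $DQ$ along the segment between the two functions. The operator is uniformly elliptic with bounded coefficients because $u\in C^{4,\alpha}$, and it has no zeroth-order term. This absence of a zeroth-order term is the point: the maximum principle, Hopf lemma and Serrin's corner lemma are all available with no sign condition on a potential.

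\textbf{Step 2 (boundary sign information).} On the part of $\partial\Sigma'_\lambda$ lying on $\Gamma_1$ we have $u=a$ on the reflected boundary, while $u\geq a$ at the original point, so $w_\lambda\le0$ there. By the strict ordering \eqref{eq:strict-ordering}, wherever the reflected cap meets $\Gamma_2$ we have $u=b\ge u(x^\lambda)$, so again the sign is correct. On $T_\lambda$ we have $w_\lambda=0$. This is exactly the sign structure Reichel requires, namely a harmonic-type comparison with ordered boundary values $a<b$. I would then check his remaining hypotheses: comparison in narrow domains, the strong maximum principle for $L_\lambda$, and the Hopf and corner lemmas. Once these hold, his topological continuation part applies as a black box. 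It yields a critical $\lambda_*$ at which either the reflected cap is internally tangent to $\Gamma_1$ or $\Gamma_2$ at a non-planar point, or $T_{\lambda_*}$ is orthogonal to $\Gamma_1$ or $\Gamma_2$ somewhere. Up to that position $w_\lambda\le0$ holds.

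\textbf{Step 3 (terminal contacts, main obstacle).} At the critical position Reichel uses constant Neumann data to conclude $w_{\lambda_*}\equiv0$; here I must replace that step by a curvature comparison. In the tangency case at a point $p$, the reflected boundary lies on one side of the original boundary near $p$. Comparing the second fundamental forms gives $H'(p)$ versus $H(p)$, with the sign convention \eqref{eq:Hconvention} taken into account. Because $c_i\ge0$, this yields $\partial_\nu u(p^{\lambda_*})\ge\partial_\nu u(p)$, or the reverse inequality matching the sign of $w$. Hopf's lemma, applied to $w_{\lambda_*}\le0$ assuming $w_{\lambda_*}\not\equiv0$, gives a strict inequality in the opposite direction, which is a contradiction.

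In the orthogonal case, I would compute the first and second derivatives of $w_{\lambda_*}$ at the corner point. The first derivatives vanish. The second-order normal–tangential derivative involves the derivative of $\partial_\nu u$ along the boundary, and hence of $H$. Symmetry of the configuration at the critical position forces this term to cancel, which requires the $C^{4,\alpha}$ regularity. Serrin's corner lemma then yields $w_{\lambda_*}\equiv0$. I expect this orthogonal corner computation to be the most delicate point, since the curvature law makes the Neumann data vary along the boundary. I would imitate Niebel's treatment \cite{NiebelExterior2026}.

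\textbf{Step 4 (conclusion).} Once $w_{\lambda_*}\equiv0$ for every direction $e$, the domain $\Omega$ is symmetric about a hyperplane orthogonal to each direction. Hence $\Gamma_1$ and $\Gamma_2$ are spheres, and they share a common center $x_0$ because all the symmetry planes pass through a common point. It follows that $\Omega=B_R(x_0)\setminus\overline{B_r(x_0)}$. The function $u$ is then invariant under all reflections through $x_0$, so it is radial.
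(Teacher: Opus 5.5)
Your outline follows the paper's route: Reichel's annular sweep used as a black box, a linearized equation with no zero-order term, curvature comparison at a tangency, Serrin's corner lemma at an orthogonal contact, and symmetry in every direction. However, Step~3 does not close on the inner component $\Gamma_2$, and the hedge ``or the reverse inequality matching the sign of $w$'' hides exactly this.

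\textbf{The missing boundary piece.} Your Step~2 omits one piece of $\partial\Sigma'_\lambda$: the mirror image of $\Gamma_2\cap\partial\Sigma_\lambda$. There $w_\lambda=b-u>0$, unless that image lies in $\overline{\Omega_{\mathrm{in}}}$. So Reichel's sweep, with $u$ extended by $b$ into the hole, must keep the reflection of $\Omega_{\mathrm{in}}\cap\{x\cdot e>\lambda\}$ \emph{inside} $\Omega_{\mathrm{in}}$. It is not enough for it to stay outside the reflected cap. Consequently a terminal tangency on $\Gamma_2$ is a tangency from inside the hole.

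\textbf{Tangency on $\Gamma_2$.} Let $P$ be the contact point and $Q\in\Gamma_2$ its mirror point. The inclusion is the reverse of the outer case: near $P$, the domain $\Omega$ lies inside the reflected cap. In coordinates $P=0$, $\nu(P)=e_n$, $\Omega=\{x_n<\varphi\}$, reflected cap $\{x_n<\psi\}$, this means $\psi\ge\varphi$. Hence $H(Q)\ge H(P)$, the opposite of what happens on $\Gamma_1$. Hopf's lemma gives $\partial_\nu u(P)<\partial_\nu u(Q)$ in either case. On $\Gamma_2$ the boundary law gives $\partial_\nu u(Q)-\partial_\nu u(P)=c_2\bigl(H(Q)-H(P)\bigr)\ge0$, which agrees with Hopf rather than contradicting it. For example, take an egg-shaped hole swept from its pointed end $Q$ toward its blunt end $P$: Hopf says $|Du(P)|<|Du(Q)|$, and $c_2>0$ says the same thing.

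\textbf{Orthogonal contact on $\Gamma_2$.} This case flips in the same way. In the paper's corner coordinates (cap in $\{x_1<0\}$, $\nu(Q)=e_n$), the inclusion now gives $\varphi_{111}(Q)\le0$ and $\sum_j\varphi_{1jj}(Q)\le0$. Hence $H_1(Q)\le0$ and $u_{n1}(Q)=c_2H_1(Q)\le0$. Then $w_{ss}(Q)=w_{1n}(Q)=-2u_{1n}(Q)\ge0$, so Serrin's alternative $w_{ss}(Q)>0$ is no longer excluded. The argument therefore closes on the hole only under $c_2\le0$ in this $H$-convention, for instance $c_2=0$, which is Reichel's case.

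\textbf{Relation to the paper.} The paper's own proof of this theorem makes the same move. It asserts that the local graph comparison from the one-height theorem is independent of the boundary component, and so yields $H(P)\ge H(Q)$ on $\Gamma_2$ as well. You have reproduced its argument faithfully, but that assertion is precisely what fails for $c_2>0$. As written, neither argument covers that case.

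\textbf{A secondary correction.} In your orthogonal-contact sketch, nothing ``cancels by symmetry'' at $\lambda_*$, because no symmetry is known yet. The mixed derivative $u_{n1}(Q)$ equals the tangential derivative of $\partial_\nu u$, namely $c_iH_1(Q)$. Only its sign is controlled, and that comes from the one-sided inclusion. You then need the degenerate case of Serrin's corner lemma, which requires checking $\chi(Q)=0$ and the vanishing of its derivatives along the edge. The paper obtains this from $(A_{\lambda_*})_{1j}=0$ on the plane. It is exactly this sign that reverses on $\Gamma_2$.
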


The first theorem is sharp with respect to the sign of $\Gamma$: when
$\Gamma<0$, connected annular counterexamples exist.  In the two-height
problem, the ordering $a<u<b$ is not automatic, and the connected nested
outer--inner topology cannot be omitted from the formulation.  Exact examples
illustrating these points are given in Section~\ref{sec:counterexamples}.

A preliminary proof of the results may stop after obtaining full rotational
symmetry, in which case a ball and a concentric annulus are both possible.
One of the purposes of the present paper is to close this gap. We prove directly
from the radial translator ODE that a nontrivial annulus satisfying the same
constant Dirichlet condition on both boundary components necessarily produces
an effective curvature coefficient $\Gamma<0$. Thus no connectedness assumption
on $\partial\Omega$ is needed. Borghini's work provides a recent account of the
corresponding Laplace theory in ring-shaped domains \cite{Borghini2023}.

We also analyze the sharpness of the hypotheses. The same radial ODE yields
actual connected annular counterexamples when the sign condition in the first
theorem is reversed.  For the two-height problem, we separate three issues:
strict ordering, connectedness, and nested topology.  An exact nonmonotone
radial shell shows that the interior ordering is not automatic, while a
disjoint union of two congruent balls disproves the unqualified statement in
which the two boundary hypersurfaces are not required to be the outer and inner
components of a connected region.  The analogous Laplace problem is known to
possess nonradial bifurcating ring domains when the relevant monotonicity sign
fails \cite{KamburovSciaraffia2021}.  More generally, symmetry-breaking
bifurcations for exterior overdetermined problems with nonconstant Neumann data
were obtained by Morabito \cite{Morabito2021}, and non-spherical steady bubbles
and drops occur in the free-boundary Euler setting
\cite{MeyerNiebelSeis2025}.  These results reinforce the need for the sign and
ordering assumptions, but we do not claim an analogous nonradial bifurcation
theorem for the translator operator here.

We now say a few words about the boundary conditions. Curvature-coupled overdetermined conditions arise naturally in free-boundary
models for bubbles and drops with surface tension.  Recent work exhibits both
rigidity and symmetry-breaking phenomena in such models: Meyer, Niebel, and
Seis constructed non-spherical steady bubbles and drops in inviscid fluids,
whereas Niebel established a global rigidity theorem for two-dimensional
bubbles in an appropriate parameter regime
\cite{MeyerNiebelSeis2025,Niebel2025}.  Although the translator problem studied
here is analytically different, these results motivate viewing
\eqref{eq:boundary} as a geometric response law whose rigidity depends
sensitively on the sign and size of the coupling parameter.
The broader literature shows that overdetermined problems on noncompact or
multiply connected domains have a substantially richer geometry than the
bounded simply connected Serrin problem.  Reichel proved radial symmetry for a
large class of elliptic overdetermined problems on exterior domains
\cite{Reichel1997}.  In the plane, Traizet obtained a finite-connectivity
classification through a correspondence with minimal surfaces
\cite{Traizet2014}.  In higher dimensions, Minlend, Weth, and Wu constructed
nontrivial exceptional domains \cite{MinlendWethWu2026}, while Morabito produced
symmetry-breaking bifurcation branches for exterior overdetermined problems
with nonconstant Neumann data \cite{Morabito2021}.  These works explain why the
sign, topology, asymptotic conditions, and precise form of the boundary law
cannot be treated as secondary hypotheses.

\subsection{Organization of the paper}

In Section~\ref{sec:preliminaries} we recall the maximum principles for
linear elliptic equations. In Section~\ref{sec:proof-ball} we prove the
one-height spherical rigidity theorem by moving planes and exclude the annular
alternative through the radial ODE. In Section~\ref{sec:second-theorem} we prove
the two-height spherical-shell theorem by the annular moving-plane method.
Section~\ref{sec:counterexamples} discusses sharpness and gives explicit
counterexamples.

\section{Preliminaries}\label{sec:preliminaries}

\subsection{Geometric, variational, physical, and analytic meaning}
\label{sec:meaning}

Let
\[
M_u=\{(x,u(x)):x\in\Omega\}\subset\mathbb{R}^{n+1},
\qquad
N=\frac{(-Du,1)}{\sqrt{1+|Du|^2}}
\]
be the graph of $u$ with its upward unit normal.  Equation
\eqref{eq:translator} is equivalent to
\[
\mathcal H_{M_u}=\langle-e_{n+1},N\rangle,
\]
so $M_u$ is a translating soliton for mean curvature flow moving with unit
velocity $-e_{n+1}$.  The same equation is the Euler--Lagrange equation of the
weighted area functional
\begin{equation}\label{eq:weighted-area}
\mathcal A[u]=\int_\Omega e^{-u}\sqrt{1+|Du|^2}\,\dd x,
\end{equation}
and hence the graph may also be viewed as a weighted minimal hypersurface.

The Dirichlet condition pins the graphical boundary to one horizontal level;
in the two-height problem the outer and inner boundary components are pinned
to the levels $a$ and $b$.  Since $u$ is constant on each boundary component,
\begin{equation}\label{eq:gradient-normal-boundary}
Du=(\partial_\nu u)\nu
\qquad\text{on }\partial\Omega.
\end{equation}
If $\eta=(\nu,0)$ is the unit normal to the vertical cylinder
$\partial\Omega\times\mathbb R$, then
\[
\langle N,\eta\rangle
=-\frac{\partial_\nu u}{\sqrt{1+(\partial_\nu u)^2}}
=-\frac{\Gamma H+C}{\sqrt{1+(\Gamma H+C)^2}}.
\]
Thus the boundary law prescribes a curvature-dependent contact-angle-type
relation.  The constant $C$ is a uniform slope bias, while $\Gamma$ measures
the response of the boundary inclination to mean curvature.  If $x$ and $u$
have the dimension of length, then $C$ is dimensionless and $\Gamma$ has the
dimension of length.  This law can model curvature-sensitive anchoring,
adhesion, or wetting, but it should not be identified with a unique physical
contact-angle law without a specific constitutive derivation; compare
\cite{Finn1986}.  Since both Dirichlet and Neumann data are imposed, it is an
additional equilibrium constraint rather than the natural boundary condition
of the bulk functional \eqref{eq:weighted-area}.

Analytically, if $\Lambda_\Omega$ denotes the nonlinear
Dirichlet-to-Neumann map for the translator equation, the overdetermined
condition is the local--nonlocal shape equation
\begin{equation}\label{eq:shape-equation}
\Lambda_\Omega(a)-\Gamma H\equiv C.
\end{equation}
Here $\Lambda_\Omega(a)$ depends on the solution throughout the domain,
whereas $H$ is determined by the local second-order geometry of the boundary.
This exceptional balance is the source of rigidity and connects the problem
to the theory of exceptional domains
\cite{Traizet2014,MinlendWethWu2026}.  Integrating the equation also gives the
necessary compatibility identity
\begin{equation}\label{eq:geometric-compatibility}
\int_{\partial\Omega}
\frac{\Gamma H+C}{\sqrt{1+(\Gamma H+C)^2}}\,\dd S
=-\int_\Omega\frac{1}{\sqrt{1+|Du|^2}}\,\dd x<0.
\end{equation}

Finally, the sign of $\Gamma$ has a direct analytic role.  At a first internal
tangency in the moving-plane argument, geometric inclusion gives
$H(P)\geq H(Q)$ under convention \eqref{eq:Hconvention}, while the boundary
law yields
\[
\partial_\nu u(P)-\partial_\nu u(Q)
=\Gamma\bigl(H(P)-H(Q)\bigr).
\]
For $\Gamma\geq0$, this has the sign needed to contradict the Hopf lemma unless
reflection symmetry has already occurred.  Thus the same parameter governs
both the geometric response law and the rigidity mechanism.

\subsection{Ellipticity and maximum principles}

Set
\[
W(p)=\sqrt{1+|p|^2},
\qquad
\Phi(p)=\frac{p}{W(p)},
\qquad
f(p)=\frac1{W(p)}.
\]
Then the equation is
\[
-\diver\Phi(Du)=f(Du).
\]
A direct computation gives
\begin{equation}\label{eq:Dphi}
D\Phi(p)
=
\frac1{W(p)}I-
\frac{p\otimes p}{W(p)^3}.
\end{equation}
For every $\xi\in\mathbb{R}^n$,
\begin{equation}\label{eq:ellipticity}
\frac{|\xi|^2}{W(p)^3}
\leq
D\Phi(p)\xi\cdot\xi
\leq
\frac{|\xi|^2}{W(p)}.
\end{equation}
Since $Du$ is bounded on $\overline\Omega$, the equation is uniformly elliptic
along the given solution.

In nondivergence form, \eqref{eq:translator} becomes
\begin{equation}\label{eq:nondiv}
\left(\delta_{ij}-\frac{u_i u_j}{1+|Du|^2}\right)u_{ij}=-1.
\end{equation}
The coefficient matrix is positive definite. Hence $u$ cannot attain an
interior minimum. Since $u=a$ on $\partial\Omega$, the strong maximum principle
and Hopf lemma yield
\begin{equation}\label{eq:u-positive}
 u>a\quad\text{in }\Omega,
\qquad
 \partial_\nu u<0\quad\text{on }\partial\Omega.
\end{equation}
In particular,
\begin{equation}\label{eq:boundary-sign}
\Gamma H+C<0\quad\text{on }\partial\Omega.
\end{equation}

\subsection{Linearization under reflection}

Fix a direction and rotate coordinates so that it is $e_1$. For
$\lambda\in\mathbb{R}$ define
\[
T_\lambda=\{x_1=\lambda\},
\qquad
x^\lambda=(2\lambda-x_1,x_2,\ldots,x_n).
\]
Whenever reflection across $T_\lambda$ maps the relevant cap into $\Omega$, set
\[
u^\lambda(x)=u(x^\lambda),
\qquad
w_\lambda=u^\lambda-u.
\]
Reflection invariance of the equation gives
\[
-\diver\Phi(Du^\lambda)=f(Du^\lambda).
\]
Subtracting the equation for $u$ and applying the fundamental theorem of
calculus in the gradient variable gives
\begin{equation}\label{eq:linearized-div}
-\diver(A_\lambda Dw_\lambda)-B_\lambda\cdot Dw_\lambda=0,
\end{equation}
where
\begin{align}
A_\lambda(x)
&=
\int_0^1
D\Phi\bigl(Du+t(Du^\lambda-Du)\bigr)\,\dd t,
\label{eq:Adef}\\
B_\lambda(x)
&=
\int_0^1
Df\bigl(Du+t(Du^\lambda-Du)\bigr)\,\dd t.
\label{eq:Bdef}
\end{align}
The matrices $A_\lambda$ are symmetric and uniformly positive definite, by
\eqref{eq:ellipticity}. Since $u\in C^{4,\alpha}(\overline\Omega)$ and $\Phi$ is
smooth, $A_\lambda\in C^{2,\alpha}$ on every closed reflected cap. Multiplying
\eqref{eq:linearized-div} by $-1$ and expanding the divergence gives the
nondivergence equation
\begin{equation}\label{eq:linearized-nondiv}
\mathcal L_\lambda w_\lambda
:= (A_\lambda)_{ij}(w_\lambda)_{ij}
+\bigl(\partial_i(A_\lambda)_{ij}+(B_\lambda)_j\bigr)
(w_\lambda)_j=0.
\end{equation}
There is no zero-order term. Consequently the strong maximum principle and the
Hopf boundary lemma apply to every reflected difference.

We next record three standard moving-plane tools in the precise forms used
below. The first is Serrin's variable-coefficient corner lemma
\cite[Section~4, Lemma~2]{Serrin1971}.

\begin{lemma}[Serrin corner lemma]\label{lem:serrin-corner}
Let $D$ have, near $Q$, a right-angle corner formed by two $C^2$ hypersurfaces
$\{\rho=0\}$ and $\{\sigma=0\}$, with $\rho<0$ and $\sigma<0$ in $D$. Let
\[
Lw=a_{ij}w_{ij}+b_iw_i
\]
be uniformly elliptic, with $a_{ij}=a_{ji}\in C^2$ and $b_i$ bounded. Assume
$w\ge0$ in $D$, $w(Q)=0$, $Lw\le0$, and $w$ is not identically zero. Put
\[
\chi=a_{ij}\rho_i\sigma_j.
\]
If $\chi(Q)>0$, then every direction $s$ entering the corner transversally has
$w_s(Q)>0$. If $\chi(Q)=0$ and every first derivative of $\chi$ tangent to
$\{\rho=0\}\cap\{\sigma=0\}$ vanishes at $Q$, then for every such $s$ either
\[
w_s(Q)>0\qquad\text{or}\qquad w_{ss}(Q)>0.
\]
\end{lemma}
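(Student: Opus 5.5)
This is Serrin's corner lemma, so I would follow Serrin's barrier construction. The plan is to argue by contradiction using a comparison function. By a change of variables one can assume $Q=0$ and straighten the corner slightly. The natural barrier, as in Serrin's paper, is built from
\[
h=\rho\sigma,
\qquad
z=h+\beta(\rho^2+\sigma^2)\,\text{-type corrections},
\]
working in a small region $D_\epsilon=D\cap B_\epsilon(Q)$. Here $z>0$ in $D$ near $Q$ (since $\rho,\sigma<0$) and $z$ vanishes on both faces. Compute
\[
Lh=2a_{ij}\rho_i\sigma_j+\rho\, L\sigma+\sigma\, L\rho=2\chi+O(|\rho|+|\sigma|).
\]
If $\chi(Q)>0$, then $Lh>0$ near $Q$. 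Take $v=w-\delta\, h$. Then $Lv\le -\delta Lh<0$, and $v\ge0$ on $\partial D_\epsilon$. On the two faces this holds because $w\ge0$ and $h=0$. On the spherical part, first reduce to $w>0$ away from $Q$ by the strong maximum principle: $w\not\equiv0$ and $w\ge0$ with $Lw\le0$ give $w>0$ in $D$. Then use a Hopf lemma on each face away from the edge to get $w\ge c\,\mathrm{dist}$ on compact parts, so $\delta$ can be chosen to make $w\ge\delta h$ there. The maximum principle then gives $w\ge\delta\rho\sigma$ in $D_\epsilon$. Finally, for a direction $s$ entering transversally, $\rho\sigma$ grows only quadratically along $s$, so this directly yields only $w(ts)\ge c t^2$. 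To get first-order growth one must instead use Serrin's refined barrier.

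For the degenerate case $\chi(Q)=0$ with vanishing tangential derivatives along the edge, Serrin's argument uses
\[
z=\rho\sigma+\frac{\gamma}{\,}(\rho^2+\sigma^2)\,|x|^2\text{-type term}-K|x|^2\rho\sigma\ldots
\]
More precisely, I would take $z=\rho\sigma-\kappa(\rho^2+\sigma^2)\cdot$ small and design it so that $Lz\ge0$. The hypothesis that $\chi$ and its edge-tangential derivatives vanish is exactly what makes the error $\chi(x)=O(|\rho|+|\sigma|)+O(|x|^2)$ absorbable: the linear normal parts of $\chi$ are controlled by terms like $\rho L\sigma$, which have the sign one can arrange by adding $-\kappa(\rho+\sigma)^2$-type multiples. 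Comparison then gives $w\ge\delta z$, and $z$ vanishes to second order at $Q$ in transversal directions with positive second derivative. Consequently, if $w_s(Q)=0$ (note $w_s(Q)\ge0$ automatically since $w\ge0$ and $w(Q)=0$), Taylor expansion gives $w_{ss}(Q)\ge\delta z_{ss}(Q)>0$.

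For the nondegenerate case I would likewise use the first-order statement via a barrier $z=\rho\sigma+$ a linear-in-distance correction, following Serrin Lemma~2's proof, but invoking it is simplest. The main obstacle is the degenerate case: constructing $z$ with $Lz\ge0$ near the edge when $\chi$ vanishes only to first order. This requires expanding $\chi$ in $\rho,\sigma$ and tangential variables, and using the $C^2$ regularity of $a_{ij}$ so that the remainder is $O(\rho^2+\sigma^2+|\rho\sigma|)$ after the tangential linear terms are removed by hypothesis. For that step I would follow Serrin's original estimates essentially verbatim; the first-order parts of the expansion are routine.
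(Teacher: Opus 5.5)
The paper does not prove this lemma; it quotes Serrin's Lemma~2. So following Serrin's barrier argument is the right frame. However, the concrete steps you write down fail exactly where you defer to ``Serrin's refined barrier'', and Serrin's lemma itself only gives the dichotomy ``$w_s(Q)>0$ or $w_{ss}(Q)>0$''.

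\textbf{The case $\chi(Q)>0$.} You propose to get $w_s(Q)>0$ from a barrier with a linear correction. No comparison argument can give this. If $z\in C^1$, $z(Q)=0$ and $z\le0$ on both faces, then $\nabla z(Q)\cdot v\le0$ on each face cone. It therefore holds on their sum, which is the whole tangent cone of $D$ at $Q$. So $w\ge\delta z$ only yields $w_s(Q)\ge\delta z_s(Q)$ with $z_s(Q)\le0$.

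The missing step is algebraic, on top of the $\rho\sigma$ comparison. Suppose $w_s(Q)=0$ for one interior $s$. Then $\nabla w(Q)=0$, so $M=D^2w(Q)$ is copositive on the tangent cone. Hence $M=P+\mu(\nabla\rho\otimes\nabla\sigma+\nabla\sigma\otimes\nabla\rho)$ with $P\succeq0$ and $\mu\ge0$. Since $b$ is bounded and $\nabla w(Q)=0$,
$0\ge Lw(Q)=\operatorname{tr}(a(Q)P)+2\mu\chi(Q)$, which forces $M=0$. This contradicts the conclusion $w_{ss}(Q)>0$ from the comparison.

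\textbf{The comparison region.} $D\cap B_\epsilon(Q)$ does not work for $n\ge3$. The sphere meets the edge $\{\rho=\sigma=0\}$, and your Hopf argument on the faces ``away from the edge'' says nothing there. The needed bound $w\ge\delta\rho\sigma$ near those points is a corner estimate at another edge point, and it can fail there if $w$ vanishes on both faces.

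\textbf{The degenerate case.} Here $\chi$ can be negative along the edge. In the paper's own application, $\chi=-(A_{\lambda_*})_{11}\varphi_1(0,\xi)\le0$ on the edge. At any edge point $x_e$ one has $L(\rho\sigma g)(x_e)=2g(x_e)\chi(x_e)$. So no barrier $\rho\sigma g$ with $g>0$ near the edge is a subsolution on $D\cap B_\epsilon(Q)$, and the tangential $O(|x|^2)$ part of $\chi$ is not ``absorbable''.

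Your corrections also have the wrong sign. Both $-\kappa(\rho^2+\sigma^2)$ and $-\kappa(\rho+\sigma)^2$ contribute $-2\kappa(a_{ij}\rho_i\rho_j+a_{ij}\sigma_i\sigma_j)+O(\kappa(|\rho|+|\sigma|))$ to $Lz$. That is an $O(1)$ negative term, which makes $Lz\ge0$ impossible when $\chi(Q)=0$.

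\textbf{What works.}
\begin{enumerate}[label=(\roman*)]
\item Use a cubic correction $-\alpha\rho\sigma(\rho+\sigma)$. It vanishes on both faces and contributes $2\alpha\bigl(|\sigma|a_{ij}\rho_i\rho_j+|\rho|a_{ij}\sigma_i\sigma_j\bigr)$. This is exactly the order of $\rho L\sigma+\sigma L\rho$ and of the normal part of $\chi$.
\item Work in the horn $\{x\in D\cap B_\epsilon(Q):|\rho|+|\sigma|>\beta|x-Q|^2\}$, which touches the edge only at $Q$. There the tangential part of $\chi$ is $O(|x-Q|^2)$, valid if $\chi\in C^{1,1}$ as in the application, hence $O(|\rho|+|\sigma|)$ and absorbable by (i).
\item Add a term such as $\gamma(\rho+\sigma)(x-Q)^{T}a(Q)^{-1}(x-Q)$. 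It makes the barrier nonpositive on the inner boundary of the horn, and the $a(Q)^{-1}$ weighting keeps its cross term $O(|\rho|+|\sigma|+|x-Q|^2)$.
\end{enumerate}
The spherical part of the boundary then stays a positive distance from the edge, and your Hopf argument applies there.
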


The next two statements are the topological parts of the moving-plane method.
They are independent of the particular terminal Neumann condition.

\begin{lemma}[Critical-plane propagation]\label{lem:propagation}
Suppose a moving-plane sweep in a connected $C^2$ domain reaches a critical
plane $T_{\lambda_*}$, all reflected differences satisfy the maximum principle,
and the reflected difference vanishes identically on one critical cap
component. Then reflection in $T_{\lambda_*}$ preserves the whole domain and
the solution.
\end{lemma}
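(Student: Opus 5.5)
The statement to prove is Lemma~\ref{lem:propagation}. The plan is a continuity (open--closed) argument on the connected domain, carried out in the coordinates of the sweep: the plane $T_\lambda$ moves in from $\lambda$ large, and $\Sigma_\lambda=\{x\in\Omega: x_1>\lambda\}$ denotes the cap. By hypothesis there is a connected component $\Sigma'$ of the critical cap $\Sigma_{\lambda_*}$ on which $w_{\lambda_*}\equiv0$, that is, $u(x^{\lambda_*})=u(x)$ for $x\in\Sigma'$. Let $\Sigma'_*$ be the reflection of $\Sigma'$ across $T_{\lambda_*}$, and set $U=\Sigma'\cup\Sigma'_*\cup(\overline{\Sigma'}\cap T_{\lambda_*})^{\circ}$. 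I will show that $U$ is a reflection-symmetric open subset of $\Omega$ on which $u$ is symmetric. I will then enlarge it to all of $\Omega$.

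First I will establish local propagation. Consider the set
\[
S=\{x\in\Omega:\ x^{\lambda_*}\in\Omega \text{ and } u(x^{\lambda_*})=u(x)\text{ in a neighborhood of }x\}.
\]
It is open by definition and contains $U$. The function $v=u\circ R_{\lambda_*}$, where $R_{\lambda_*}$ is the reflection, solves the same translator equation wherever it is defined, by reflection invariance. By the linearization \eqref{eq:linearized-nondiv}, $w=v-u$ satisfies a linear uniformly elliptic equation with no zero-order term and $C^{2,\alpha}$ coefficients on any region where both $u$ and $v$ are defined. Unique continuation, or more simply the analyticity of solutions of the quasilinear elliptic equation with analytic structure, shows that on each connected open region $G\subset\Omega\cap R_{\lambda_*}(\Omega)$ containing a point of $S$, we have $w\equiv0$. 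Hence $S$ equals the union of those components of $\Omega\cap R_{\lambda_*}\Omega$ that meet $U$. Call this union $G_0$; it is a symmetric open set on which $u$ is even.

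Next I need closedness relative to $\Omega$, which is the main obstacle. Suppose $x\in\Omega\cap\partial G_0$. Then either $x\in\partial\Omega$ or $x^{\lambda_*}\in\partial\Omega$. The first case is impossible because $x\in\Omega$, so assume $x^{\lambda_*}\in\partial\Omega$. Letting $y\to x$ within $G_0$, we have $u(y)=u(y^{\lambda_*})$, and by continuity $u(x)=u(x^{\lambda_*})=a$. This uses the fact that $u$ is constant equal to $a$ on the relevant boundary component; in the two-height case I will use constancy on each component together with the ordering. But $u>a$ in $\Omega$ by \eqref{eq:u-positive}, or $a<u<b$ in the shell case, so $u(x)=a$ is a contradiction. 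Hence $\Omega\cap\partial G_0=\emptyset$. Since $\Omega$ is connected, $G_0=\Omega$, so $\Omega\subset R_{\lambda_*}\Omega$. Applying the reflection gives equality, so reflection preserves $\Omega$, and $u$ is symmetric.

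The delicate point is that the boundary value $a$ cannot be attained in the interior; this is exactly where the maximum principle hypothesis of the lemma is used. For a general sweep I would phrase it as follows: all reflected differences satisfy the strong maximum principle, so $u$ cannot take its boundary value at interior points. I would also check that $U$ is nonempty and open across $T_{\lambda_*}$, using that $\overline{\Sigma'}$ meets $T_{\lambda_*}$ in a relatively open piece.
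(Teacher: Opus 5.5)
Your proof is correct. It shares the paper's open--closed skeleton but closes it by a different mechanism.

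\textbf{The paper's route.} The paper only sketches the argument on the maximal union of matched reflected cap components. Interior termination is excluded by uniqueness for \eqref{eq:linearized-nondiv}. Termination at a new boundary point is excluded by asserting that it would produce another critical tangency or orthogonal contact, with a pointer to Reichel's Step~(I).

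\textbf{Your route.} You run unique continuation (or analyticity) on a whole component $G_0$ of $\Omega\cap R_{\lambda_*}\Omega$ that contains $\Sigma'$. You then observe that $\Omega\cap\partial G_0\subset R_{\lambda_*}(\partial\Omega)$. Finally you exclude such points by continuity: a point $x\in\Omega$ with $x^{\lambda_*}\in\partial\Omega$ would give $u(x)=a$, respectively $u(x)\in\{a,b\}$. This is impossible by \eqref{eq:u-positive}, respectively \eqref{eq:strict-ordering}.

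\textbf{Comparison.} Yours is the classical Serrin closing step. It is fully self-contained, needs no further contact analysis, and works equally for disconnected $\partial\Omega$ and for Reichel's annular components. The paper's formulation aims to be independent of the boundary data, but in doing so it leaves the boundary step essentially to the reference.

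\textbf{The hypothesis you add.} Your route uses one fact that is not among the lemma's stated hypotheses: $u$ does not attain its Dirichlet values in $\Omega$. You attribute this to the maximum principle for the reflected differences, but it does not follow from that. In Theorem~\ref{thm:main} it comes from the maximum principle for $u$ itself via \eqref{eq:nondiv}. In Theorem~\ref{thm:shell} it is the explicit assumption \eqref{eq:strict-ordering}. You should state it as an added, context-supplied hypothesis.

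\textbf{Two minor points.} First, $S$ need not equal the union of the components meeting $U$, since other components might happen to carry $w\equiv0$; you only need $G_0\subset S$. Second, the auxiliary set $U$, including the flat piece of $T_{\lambda_*}$, is unnecessary. The set $\Sigma'$ is already a nonempty open subset of $\Omega\cap R_{\lambda_*}\Omega$ on which $w\equiv0$, because the open reflected critical cap lies in $\operatorname{int}\overline\Omega=\Omega$.
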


\begin{proof}
This is the propagation step in the classical moving-plane method; see
Reichel's Step~(I) in \cite{Reichel1997} and the same use in
\cite{NiebelExterior2026}. The point is that the maximal union of reflected
cap components on which equality holds cannot terminate at an interior point,
by uniqueness for the linear equation \eqref{eq:linearized-nondiv}, and cannot
terminate at a new boundary point without producing another critical tangency
or orthogonal contact. Thus the matched union is both relatively open and
relatively closed in the connected domain, and is therefore the whole domain.
\end{proof}

\begin{lemma}[Reichel annular continuation]\label{lem:annular-continuation}
Let
\[
\Omega=\Omega_{\rm out}\setminus\overline{\Omega_{\rm in}}
\]
be a $C^2$ ring domain and let $u$ take constants $a<b$ on the outer and inner
components, respectively, with $a<u<b$ in $\Omega$. Assume that for every
Euclidean reflection the difference of the reflected solution and the original
solution satisfies a linear uniformly elliptic equation with no positive
zero-order term, together with the strong maximum principle and Hopf lemma.
Then, in every direction, Reichel's annular sweep reaches a critical plane at
which exactly one of the following occurs:
\begin{enumerate}[label=\textup{(\roman*)}]
\item the whole ring domain and the solution are symmetric;
\item a reflected piece of one boundary component is internally tangent to the
same component away from the plane, and the reflected difference has a
nontrivial boundary zero there;
\item the plane meets one boundary component orthogonally, and the reflected
difference has a nontrivial corner zero there.
\end{enumerate}
A contact between different boundary components is not terminal.
\end{lemma}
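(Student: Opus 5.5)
The plan is to reduce the lemma to Reichel's annular moving-plane theorem \cite[Theorem~2]{Reichel1995}, checking its hypotheses directly for the reflected differences $w_\lambda$ of \eqref{eq:linearized-nondiv}. The continuation mechanism itself is Reichel's, essentially as in Lemma~\ref{lem:propagation}.

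\emph{Setup.} Fix the direction $e_1$ and start with $T_\lambda$ for $\lambda>\sup_{\Omega_{\rm out}}x_1$, then decrease $\lambda$. Extend $u$ to $\tilde u$ on $\overline{\Omega_{\rm out}}$ by setting $\tilde u\equiv b$ on $\overline{\Omega_{\rm in}}$. Since $a<u<b$ and $u=b$ on $\Gamma_2$, the extension $\tilde u$ is continuous and satisfies $\tilde u\le b$. For the cap
\[
\Sigma_\lambda=\{x\in\Omega_{\rm out}:x_1>\lambda\},
\]
as long as its reflection stays in $\Omega_{\rm out}$, define the comparison set and difference
\[
D_\lambda=\{x\in\Sigma_\lambda\cap\Omega:\ x^\lambda\in\Omega\},\qquad w_\lambda=\tilde u(x^\lambda)-u(x).
\]

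\emph{Sign of $w_\lambda$ on $\partial D_\lambda$.} This is where the ordering hypothesis enters.
\begin{enumerate}[label=\textup{(\alph*)}]
\item On $T_\lambda$ we have $w_\lambda=0$.
\item On $\Gamma_1\cap\partial D_\lambda$ we have $u=a<\tilde u(x^\lambda)$.
\item On the part of $\partial D_\lambda$ where $x^\lambda\in\Gamma_2$, we have $w_\lambda=b-u>0$.
\item On $\Gamma_2\cap\partial\Sigma_\lambda$, a point $x$ with $x^\lambda\in\Omega$ gives $w_\lambda=u(x^\lambda)-b<0$. So Reichel arranges the sweep so that $\Omega_{\rm in}$ is handled by the reflected inner hole. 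I would use his ordering of the two terminal events: the sweep is stopped at the first $\lambda$ where either $\Sigma_\lambda\cap\Omega_{\rm in}$ would reflect outside $\Omega_{\rm in}$, or the outer cap reflection leaves $\Omega_{\rm out}$.
\end{enumerate}
Where $x^\lambda$ lies inside $\Omega_{\rm in}$, the inequality $w_\lambda=b-u>0$ holds pointwise. Hence the only place where the equation is needed is $D_\lambda$, where \eqref{eq:linearized-nondiv} holds.

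\emph{Start of the sweep and definition of $\lambda_*$.} For $\lambda$ near the top, $D_\lambda$ is narrow. The narrow-domain maximum principle for operators with no zero-order term then gives $w_\lambda\ge0$. Define $\lambda_*$ as the infimum of the $\lambda$ such that the geometric admissibility conditions hold and $w_\mu\ge0$ for all $\mu\ge\lambda$.

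\emph{Alternatives at $\lambda_*$.} The strong maximum principle on each component of $D_{\lambda_*}$ gives either $w_{\lambda_*}>0$ or $w_{\lambda_*}\equiv0$.
\begin{itemize}
\item If $w_{\lambda_*}\equiv0$ on some component, Lemma~\ref{lem:propagation} yields alternative~(i).
\item If $w_{\lambda_*}>0$ and $\lambda_*$ is not a geometric stopping value, I would argue that the sweep can be continued, contradicting the definition of $\lambda_*$. The tools are Hopf's lemma on $T_{\lambda_*}\cap\Omega$, which gives $\partial_{x_1}w>0$ there, together with a compactness and narrow-region argument.
\item Hence $\lambda_*$ is geometric, which means an internal tangency of a reflected piece of one component with the same component, giving~(ii), or an orthogonal meeting of $T_{\lambda_*}$ with a component, giving~(iii). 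At such a point $w_{\lambda_*}$ vanishes, because both sides equal the common boundary constant, and it is not identically zero.
\end{itemize}

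\emph{Cross contacts are not terminal.} Suppose the reflection of $\Gamma_1$ touches $\Gamma_2$, or the reverse. The boundary values there differ, $a\neq b$, so $w_{\lambda_*}$ is bounded below by a positive quantity near the contact. By continuity the same positivity persists for slightly smaller $\lambda$, and the sweep continues.

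\emph{Main obstacle.} I expect the delicate step to be the continuation past $\lambda_*$ when $w_{\lambda_*}>0$. The set $D_\lambda$ has corners where reflected pieces of $\Gamma_2$ enter, so a uniform narrow-domain maximum principle is required that tolerates these nonsmooth pieces. There I would rely on the strict inequality $w=b-u>0$ on the reflected-hole boundary: it lets one excise a neighborhood of those pieces and apply the maximum principle only on a compact subset bounded away from them, exactly as in Reichel's proof.
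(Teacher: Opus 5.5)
Your proposal is correct and takes the same route as the paper. Both reduce the lemma to Reichel's annular sweep and use $a<u<b$ only to get strict signs where reflected and unreflected points lie on different components; your extension $\tilde u\equiv b$ on $\overline{\Omega_{\rm in}}$ with the two-sided geometric stopping rule is simply a more explicit version of what the paper cites. Two corrections: the step you call the main obstacle does not arise, because with no positive zero-order term the weak maximum principle holds on every bounded $D_\lambda$, so $w_\lambda\ge0$ for all geometrically admissible $\lambda$ and $\lambda_*$ is automatically a geometric stopping value; and a ``reverse'' cross contact would give $w=a-b<0$, so it is ruled out by your hole stopping rule, not by $a\neq b$.
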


\begin{proof}
This is the specialized form of Reichel's annular moving-plane theorem
\cite[Theorem~2 and its Steps~(I)--(IV)]{Reichel1995}. We emphasize that it is
not the naive maximum-principle argument on the entire geometric cap. Reichel
uses maximal reflected components and a sliding continuation through the
possible interaction with the hole. The strict inequalities $a<u<b$ give the
strict sign whenever reflected and unreflected points lie on different
boundary components, while the Hopf signs at the two components prevent a
terminal loss of comparison at the hole. The proof uses only reflection
invariance, the maximum principle for reflected differences, and these strict
boundary signs; the terminal overdetermined condition is used only after one
of alternatives \textup{(ii)}--\textup{(iii)} has been reached.
\end{proof}

\section{Proof of Theorem~\ref{thm:main}}\label{sec:proof-ball}

\subsection{Moving planes}

Let, for any $\lambda\in R$,
\[
\Sigma_\lambda=\Omega\cap\{x_1<\lambda\}.
\]
Starting with $\lambda$ near the leftmost point of $\Omega$, the reflected cap
is contained in $\Omega$. On the flat part of its boundary,
$
w_\lambda=0\quad\text{on }T_\lambda\cap\Omega.
$
On the curved part, if $x\in\partial\Omega$ and $x^\lambda\in\overline\Omega$,
then
\[
w_\lambda(x)=u(x^\lambda)-a\geq0
\]
by \eqref{eq:u-positive}. The maximum principle applied componentwise to
\eqref{eq:linearized-div} therefore gives
\begin{equation}\label{eq:wpositive}
w_\lambda\geq0
\end{equation}
in every reflected cap before the critical position.

Move the plane until the critical value $\lambda_*$. The standard geometric
alternative is:
\begin{enumerate}[label=\textup{(\roman*)}]
\item a reflected boundary piece is internally tangent to $\partial\Omega$ at a
point away from $T_{\lambda_*}$;
\item $T_{\lambda_*}$ meets $\partial\Omega$ orthogonally.
\end{enumerate}
This formulation also covers contacts between different connected components of
$\partial\Omega$: the reflected cap lies on the interior side of the contacted
boundary, and the outward normals agree at the tangency point.

Write $w=w_{\lambda_*}$. If $w\not\equiv0$ on the relevant cap component, then
$
w>0
$
in its interior by the strong maximum principle. We now exclude both critical
alternatives.

\subsection{Internal tangency}

Let $Q$ be the preimage point in the moving cap and let
$P=Q^{\lambda_*}\in\partial\Omega$ be the tangency point. Since the Dirichlet
value is the same on every boundary component,
we have $
w(Q)=u(P)-u(Q)=0.
$
The Hopf lemma gives
\begin{equation}\label{eq:hopf-tangency}
\partial_{\nu(Q)}w(Q)<0.
\end{equation}
Reflection maps $\nu(Q)$ to $\nu(P)$, so the overdetermined boundary condition
yields
\begin{align}
\partial_{\nu(Q)}w(Q)
&=
\partial_\nu u(P)-\partial_\nu u(Q)\notag\\
&=
\Gamma\bigl(H(P)-H(Q)\bigr).
\label{eq:normal-difference}
\end{align}

To compare the curvatures, translate and rotate coordinates so that the
contact point is the origin and the common outer normal is $e_n$. Write the
original and reflected boundary pieces as
\[
x_n=\varphi(x'),
\qquad
x_n=\psi(x'),
\]
with the corresponding domains locally below the graphs. Then
\[
\varphi(0)=\psi(0)=0,
\qquad
\nabla\varphi(0)=\nabla\psi(0)=0.
\]
The reflected cap lies inside $\Omega$, hence
$
\psi\leq\varphi
$
near the origin. Therefore
\[
D^2\varphi(0)-D^2\psi(0)\geq0
\]
as quadratic forms.

With the convention \eqref{eq:Hconvention}, a graph whose domain lies below it
has mean curvature
\begin{equation}\label{eq:graphH}
H=
\frac1{n-1}
\diver\left(\frac{\nabla\varphi}{\sqrt{1+|\nabla\varphi|^2}}\right).
\end{equation}
At a horizontal tangent point,
$
H=\frac1{n-1}\Delta\varphi.
$
Consequently,
\[
H(P)\geq H_{\mathrm{ref}}(P)=H(Q).
\]
Since $\Gamma\geq0$, \eqref{eq:normal-difference} gives
\[
\partial_{\nu(Q)}w(Q)\geq0,
\]
contradicting \eqref{eq:hopf-tangency}. Thus internal tangency is impossible
unless $w\equiv0$.

\subsection{Orthogonal contact and the corner lemma}

Assume that the critical plane meets $\partial\Omega$ orthogonally at $Q$. After
translation and rotation, suppose
\[
Q=0,
\qquad
T_{\lambda_*}=\{x_1=0\},
\qquad
\nu(Q)=e_n.
\]
Near the origin write
\[
\partial\Omega=\{x_n=\varphi(x_1,\xi)\},
\qquad
\xi=(x_2,\ldots,x_{n-1}),
\]
and
\[
\Omega=\{x_n<\varphi(x_1,\xi)\},
\qquad
\varphi(0)=0,
\qquad
\nabla\varphi(0)=0.
\]
The reflected-cap inclusion implies, for $x_1<0$,
\begin{equation}\label{eq:graph-reflection}
\varphi(x_1,\xi)\leq\varphi(-x_1,\xi).
\end{equation}
The symmetric difference quotient gives
\[
\varphi_1(0,\xi)\geq0.
\]
Orthogonality implies $\varphi_1(0,0)=0$, so
$\xi\mapsto\varphi_1(0,\xi)$ has a local minimum at the origin. Hence
\begin{equation}\label{eq:mixed-third}
\sum_{j=2}^{n-1}\varphi_{1jj}(0)\geq0.
\end{equation}

Let
\[
g(t)=\varphi(t,0)-\varphi(-t,0).
\]
For $t<0$, \eqref{eq:graph-reflection} gives $g(t)\leq0$. Since
$\varphi_1(0)=0$,
\[
g(t)=\frac13\varphi_{111}(0)t^3+o(t^3),
\]
and therefore
\begin{equation}\label{eq:phi111}
\varphi_{111}(0)\geq0.
\end{equation}
Differentiating \eqref{eq:graphH} at the origin gives
\begin{equation}\label{eq:H1}
H_1(0)
=
\frac1{n-1}
\left(
\varphi_{111}(0)+
\sum_{j=2}^{n-1}\varphi_{1jj}(0)
\right)
\geq0.
\end{equation}

Define the boundary normal derivative in graph coordinates by
\[
q(x_1,\xi)
=
\partial_\nu u\bigl(x_1,\xi,\varphi(x_1,\xi)\bigr).
\]
The boundary law gives
$
q=\Gamma H+C,
$
and hence
\begin{equation}\label{eq:q1}
q_1(0)=\Gamma H_1(0)\geq0.
\end{equation}
Because $u=a$ on the graph,
\[
u(x_1,\xi,\varphi(x_1,\xi))=a.
\]
At the origin, all tangential first derivatives of $u$ vanish. Moreover,
\[
q=
\frac{u_n-\sum_{j=1}^{n-1}u_j\varphi_j}
{\sqrt{1+|\nabla\varphi|^2}}.
\]
Differentiating this identity at $0$ and using
$u_j(0)=\varphi_j(0)=0$ for $j<n$, we obtain
\begin{equation}\label{eq:q1un1}
q_1(0)=u_{n1}(0).
\end{equation}
Thus
\begin{equation}\label{eq:un1}
u_{n1}(0)\geq0.
\end{equation}

At the critical position,
\[
w(x)=u(-x_1,x_2,\ldots,x_n)-u(x_1,x_2,\ldots,x_n).
\]
At $0$ one has
\[
\nabla w(0)=0,
\qquad
w_{11}(0)=w_{nn}(0)=0,
\qquad
w_{1n}(0)=-2u_{1n}(0)\leq0.
\]
The local cap lies in the corner $x_1<0$, $x_n<0$. For its inward bisector
\[
s=\frac{-e_1-e_n}{\sqrt2},
\]
we therefore have
\begin{equation}\label{eq:corner-derivatives}
w_s(0)=0,
\qquad
w_{ss}(0)=w_{1n}(0)\leq0.
\end{equation}

We now verify every structural hypothesis in
Lemma~\ref{lem:serrin-corner}. Let $R=\operatorname{diag}(-1,1,\ldots,1)$ be
the reflection matrix. For $x\in T_{\lambda_*}$ one has
\[
Du^{\lambda_*}(x)=RDu(x).
\]
Writing $p=Du(x)$, the vector on the integration segment in
\eqref{eq:Adef} is
\[
z_t=((1-2t)p_1,p_2,\ldots,p_n).
\]
For $j>1$, formula \eqref{eq:Dphi} gives
\[
D\Phi(z_t)_{1j}=-\frac{(1-2t)p_1p_j}{(1+|z_t|^2)^{3/2}}.
\]
The integrand is odd under $t\mapsto1-t$, hence
\begin{equation}\label{eq:cross-coeff-plane}
(A_{\lambda_*})_{1j}=0
\qquad\text{on }T_{\lambda_*},\quad j=2,\ldots,n.
\end{equation}
In particular all derivatives of these functions tangent to the plane vanish.

Choose the corner defining functions
\[
\rho(x)=x_1,
\qquad
\sigma(x)=x_n-\varphi(x_1,\xi),
\]
so that $\rho<0$ and $\sigma<0$ in the local cap. At $Q=0$,
\[
\chi:= (A_{\lambda_*})_{ij}\rho_i\sigma_j
=(A_{\lambda_*})_{1n}
-\sum_{j=1}^{n-1}(A_{\lambda_*})_{1j}\varphi_j=0,
\]
by \eqref{eq:cross-coeff-plane} and $\nabla\varphi(0)=0$. The tangent space to
the corner edge is spanned at $Q$ by $e_2,\ldots,e_{n-1}$. For
$k=2,\ldots,n-1$, differentiation gives
\[
\partial_k\chi(Q)
=\partial_k(A_{\lambda_*})_{1n}(Q)
-(A_{\lambda_*})_{11}(Q)\varphi_{1k}(Q)=0.
\]
Here the first term vanishes because $(A_{\lambda_*})_{1n}$ vanishes
identically on the reflecting plane, and the second vanishes because
$\xi\mapsto\varphi_1(0,\xi)$ has an extremum at $\xi=0$. Thus the equality
case of Lemma~\ref{lem:serrin-corner} applies. It forces either $w_s(0)>0$ or
$w_{ss}(0)>0$, contradicting \eqref{eq:corner-derivatives}. Therefore
orthogonal contact is impossible unless $w\equiv0$.

\subsection{Rotational symmetry}

For every direction, the comparison function at the critical plane vanishes
identically on a critical cap component. Lemma~\ref{lem:propagation} upgrades
this local equality to reflection symmetry of the whole connected domain and
of $u$. Every such reflection fixes the barycenter
\[
x_0=\frac1{|\Omega|}\int_\Omega x\,\dd x,
\]
so all symmetry planes pass through $x_0$. Reflections through hyperplanes
through $x_0$ generate the full orthogonal group about $x_0$. Consequently,
$\Omega$ and $u$ are rotationally invariant about $x_0$.

Because $\Omega$ is connected, there exist $0\leq r<R$ such that
\begin{equation}\label{eq:ball-or-annulus}
\Omega=\{x\in\mathbb{R}^n:r<|x-x_0|<R\}.
\end{equation}
If $r=0$, the theorem is proved. It remains to exclude $r>0$.

\subsection{Exclusion of a concentric annulus}\label{sec:annulus}

Assume that
\[
\Omega=\{x:r<|x|<R\},
\qquad 0<r<R,
\]
and write $u(x)=U(\rho)$, where $\rho=|x|$. Set
\[
p(\rho)=U'(\rho).
\]
The radial form of \eqref{eq:translator} is
\begin{equation}\label{eq:radial-div}
-\frac1{\rho^{n-1}}
\frac{\dd}{\dd\rho}
\left(
\rho^{n-1}\frac{p}{\sqrt{1+p^2}}
\right)
=
\frac1{\sqrt{1+p^2}}.
\end{equation}
Equivalently,
\begin{equation}\label{eq:pODE}
p'
=
-(1+p^2)
\left(1+\frac{n-1}{\rho}p\right).
\end{equation}
Since $U(r)=U(R)=a$ and $U>a$ in the annulus, Hopf's lemma gives
\begin{equation}\label{eq:pboundary}
p(r)>0,
\qquad
p(R)<0.
\end{equation}

\begin{lemma}\label{lem:pmonotone}
The function $p$ is strictly decreasing on $[r,R]$ and has a unique zero
$c\in(r,R)$.
\end{lemma}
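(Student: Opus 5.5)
Since $p(r)>0$ and $p(R)<0$ by \eqref{eq:pboundary}, the intermediate value theorem already provides at least one zero of $p$ in $(r,R)$. The plan is to show that $p'<0$ at any zero of $p$, so that every zero is a strict downward crossing and hence unique, and then to show that $p$ is decreasing everywhere on $[r,R]$.

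At a zero $c$, \eqref{eq:pODE} gives $p'(c)=-(1+0)(1+0)=-1<0$, so every zero is a transversal downward crossing. If there were two zeros $c_1<c_2$, the continuous function $p$ would be negative just to the right of $c_1$ and positive just to the left of $c_2$. Hence it would have a zero in $(c_1,c_2)$ at which it crosses upward, or at least one where $p'\ge 0$. This is impossible, so the zero $c$ is unique. Moreover $p>0$ on $[r,c)$ and $p<0$ on $(c,R]$.

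For strict monotonicity we read off the sign of $p'$ from \eqref{eq:pODE}, which is the sign of $-(1+\frac{n-1}{\rho}p)$.
\begin{itemize}[label=$\circ$]
\item On $[r,c]$ we have $p\ge0$, so $1+\frac{n-1}{\rho}p\ge1>0$ and hence $p'<0$.
\item On $(c,R]$ we have $p<0$, and the factor $1+\frac{n-1}{\rho}p$ could in principle become nonpositive. This is the main obstacle.
\end{itemize}

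To handle it, set $h(\rho)=1+\frac{n-1}{\rho}p(\rho)$. We know $h(c)=1>0$. Suppose $h$ first vanished at some $\rho_1\in(c,R]$. Then $h>0$ on $[c,\rho_1)$, so $p$ is strictly decreasing there, and $p(\rho_1)=-\rho_1/(n-1)$. Differentiating,
\[
h'=\frac{n-1}{\rho}\Bigl(p'-\frac{p}{\rho}\Bigr),
\]
and at $\rho_1$ we have $p'(\rho_1)=0$ and $-p/\rho=\frac{1}{n-1}>0$. This gives $h'(\rho_1)=\frac{1}{\rho_1}>0$. But $h$ descends from positive values to $0$ at $\rho_1$, which forces $h'(\rho_1)\le0$, a contradiction.

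Hence $h>0$ on all of $[r,R]$, so $p'<0$ throughout and $p$ is strictly decreasing. Strict monotonicity then also gives the uniqueness of the zero directly.
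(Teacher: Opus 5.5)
Your proof is correct and follows essentially the paper's route: the paper runs the same first-crossing argument on $p+\rho/(n-1)$, a positive multiple of your $h=1+\frac{n-1}{\rho}p$, starting directly from $h(r)>0$, so your preliminary zero-counting step is harmless but unnecessary. Once $p'<0$ on $[r,R]$, the unique zero follows from $p(r)>0>p(R)$, exactly as you conclude.
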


\begin{proof}
Define
\[
h(\rho)=p(\rho)+\frac{\rho}{n-1}.
\]
Initially $h(r)>0$. If $h$ had a first zero at $\rho_0$, then
$p(\rho_0)=-\rho_0/(n-1)$, so \eqref{eq:pODE} would give $p'(\rho_0)=0$ and
therefore
\[
h'(\rho_0)=\frac1{n-1}>0.
\]
This is incompatible with a first crossing from positive values. Hence
\[
1+\frac{n-1}{\rho}p(\rho)>0
\]
throughout the interval. Equation \eqref{eq:pODE} now gives $p'<0$. The unique
zero follows from \eqref{eq:pboundary}.
\end{proof}

Let
\[
\alpha=p(r)>0,
\qquad
\beta=-p(R)>0.
\]
For $s\in[0,\alpha]$, let $\rho_+(s)\in[r,c]$ be defined by
\[
p(\rho_+(s))=s,
\]
and for $s\in[0,\beta]$, let $\rho_-(s)\in[c,R]$ be defined by
\[
p(\rho_-(s))=-s.
\]
By \eqref{eq:pODE},
\begin{align}
-\rho_+'(s)
&=
\frac1{(1+s^2)
\left(1+\dfrac{n-1}{\rho_+(s)}s\right)},
\label{eq:rho-plus}\\
\rho_-'(s)
&=
\frac1{(1+s^2)
\left(1-\dfrac{n-1}{\rho_-(s)}s\right)}.
\label{eq:rho-minus}
\end{align}
The denominator in \eqref{eq:rho-minus} is positive by
Lemma~\ref{lem:pmonotone}. Consequently, for every positive $s$ in the common
range,
\begin{equation}\label{eq:density-compare}
\frac{s}{(1+s^2)
\left(1+\dfrac{n-1}{\rho_+(s)}s\right)}
<
\frac{s}{1+s^2}
<
\frac{s}{(1+s^2)
\left(1-\dfrac{n-1}{\rho_-(s)}s\right)}.
\end{equation}

Since the two boundary values are equal,
\[
0=U(R)-U(r)=\int_r^R p(\rho)\,\dd\rho.
\]
Thus the positive and negative signed areas of $p$ agree:
\begin{align}
\int_r^c p(\rho)\,\dd\rho
&=
\int_0^\alpha
\frac{s\,\dd s}{(1+s^2)
\left(1+\dfrac{n-1}{\rho_+(s)}s\right)},
\label{eq:left-area}\\
-\int_c^R p(\rho)\,\dd\rho
&=
\int_0^\beta
\frac{s\,\dd s}{(1+s^2)
\left(1-\dfrac{n-1}{\rho_-(s)}s\right)}.
\label{eq:right-area}
\end{align}
If $\alpha\leq\beta$, the pointwise strict inequality
\eqref{eq:density-compare} would imply that the right-hand side of
\eqref{eq:right-area} is strictly larger than the right-hand side of
\eqref{eq:left-area}, contradicting equality of the two areas. Therefore
\begin{equation}\label{eq:alpha-beta}
\alpha>\beta.
\end{equation}

The outer normal derivative of the annulus is
\[
q_{\mathrm{in}}=-p(r)=-\alpha
\quad\text{on the inner sphere},
\qquad
q_{\mathrm{out}}=p(R)=-\beta
\quad\text{on the outer sphere}.
\]
Thus \eqref{eq:alpha-beta} gives
\begin{equation}\label{eq:q-annulus-negative}
q_{\mathrm{in}}-q_{\mathrm{out}}
=
\beta-\alpha<0.
\end{equation}
On the other hand, with the convention \eqref{eq:Hconvention},
\[
H_{\mathrm{in}}=\frac1r,
\qquad
H_{\mathrm{out}}=-\frac1R.
\]
The boundary law therefore gives
\begin{equation}\label{eq:q-annulus-law}
q_{\mathrm{in}}-q_{\mathrm{out}}
=
\Gamma\left(\frac1r+\frac1R\right)\geq0,
\end{equation}
contradicting \eqref{eq:q-annulus-negative}. Hence $r=0$, and
$
\Omega=B_R(x_0).
$
For the resulting radial solution, write $u(x)=U(\rho)$ with
$\rho=|x-x_0|$ and $p=U'$. Then $p$ satisfies \eqref{eq:pODE}, together with
the regularity condition $p(0)=0$. Since
$H_{\partial B_R}=-1/R$, the boundary law yields the necessary compatibility
relation
\begin{equation}\label{eq:ball-compatibility}
 p(R)=C-\frac{\Gamma}{R}<0.
\end{equation}
Thus the constants $C$, $\Gamma$, and the radius $R$ cannot be prescribed
independently: the right-hand side must equal the boundary slope of the regular
radial translator profile. This completes the proof of
Theorem~\ref{thm:main}.

\section{Proof of Theorem~\ref{thm:shell}}\label{sec:second-theorem}

\subsection{Boundary signs and the annular moving-plane alternative}

By \eqref{eq:strict-ordering} and the Hopf boundary lemma,
\begin{equation}\label{eq:twoheight-hopf}
\partial_\nu u<0\quad\text{on }\Gamma_1,
\qquad
\partial_\nu u>0\quad\text{on }\Gamma_2.
\end{equation}
Notice that on $\Gamma_2$ the normal $\nu$ points into the hole.

The reflected difference satisfies \eqref{eq:linearized-nondiv}. We apply
Lemma~\ref{lem:annular-continuation}. Its analytic assumptions are verified as
follows: reflection invariance is built into the translator operator;
$A_\lambda$ is uniformly positive definite by \eqref{eq:ellipticity}; the
linearized equation has no zero-order term; and the strong maximum principle
and Hopf lemma apply. The strict ordering \eqref{eq:strict-ordering} gives the
required strict sign when a reflected point and an unreflected point belong to
different boundary components, while \eqref{eq:twoheight-hopf} supplies the
boundary signs used in Reichel's sliding continuation. Therefore, unless
symmetry has already occurred, the terminal critical zero is either a
same-component internal tangency away from the plane or an orthogonal contact
with one of the components.

\subsection{Exclusion of same-component internal tangency}

Fix a direction and let $w=u^{\lambda_*}-u$ be the reflected difference at the
critical plane. Suppose that the reflected copy of $\Gamma_i$ is internally
tangent to $\Gamma_i$ at $P$, and let $Q$ be its reflected preimage. Since
$P,Q\in\Gamma_i$, the Dirichlet values agree and
\[
w(Q)=0.
\]
If $w$ is not identically zero, the Hopf lemma yields
\[
\partial_{\nu(Q)}w(Q)<0.
\]
Reflection maps $\nu(Q)$ to $\nu(P)$. On the component $\Gamma_i$, the
boundary condition is
\[
\partial_\nu u=c_iH+C_i.
\]
Consequently, the additive constant $C_i$ cancels and
\begin{equation}\label{eq:twoheight-normal-difference}
\partial_{\nu(Q)}w(Q)
=
\partial_\nu u(P)-\partial_\nu u(Q)
=
c_i\bigl(H(P)-H(Q)\bigr).
\end{equation}
The local graph comparison used in the proof of
Theorem~\ref{thm:main} is independent of the value of the Dirichlet constant
and of the boundary component. It gives
\[
H(P)\geq H(Q).
\]
Since $c_i\geq0$, the right-hand side of
\eqref{eq:twoheight-normal-difference} is nonnegative, contradicting the Hopf
lemma. Thus same-component internal tangency is impossible unless
$w\equiv0$.

\subsection{Exclusion of orthogonal contact}

Suppose next that the critical plane is orthogonal to $\Gamma_i$ at $Q$. The
local calculation in the section on orthogonal contact applies to this
component because it uses only the following facts:

\begin{enumerate}[label=\textup{(\alph*)}]
\item the reflected cap lies on the interior side of the contacted boundary;
\item $u$ is constant on the particular component $\Gamma_i$;
\item on that component, the boundary law is
$\partial_\nu u=c_iH+C_i$ with $c_i\geq0$.
\end{enumerate}

After choosing coordinates with $Q=0$, critical plane $\{x_1=0\}$, and
$\nu(Q)=e_n$, the graph comparison gives
\[
H_1(Q)\geq0.
\]
Writing $q=\partial_\nu u$ along $\Gamma_i$, differentiation tangentially to
$\Gamma_i$ gives
\[
q_1(Q)=c_iH_1(Q)\geq0,
\]
because $C_i$ is constant on the component. Since $u$ is constant on
$\Gamma_i$, the boundary-coordinate calculation from the proof of
Theorem~\ref{thm:main} yields
\[
q_1(Q)=u_{n1}(Q),
\qquad\nu(Q)=e_n,
\]
and hence
\[
u_{n1}(Q)\geq0.
\]
For the reflected difference,
\[
w_{1n}(Q)=-2u_{1n}(Q)\leq0,
\]
whereas all first derivatives and the two pure second derivatives in the
corner directions vanish. The coefficient verification in
\eqref{eq:cross-coeff-plane} and the subsequent computation of the tangential
derivatives of $\chi$ depend only on the constancy of $u$ on the contacted
component. They therefore apply without change on either $\Gamma_1$ or
$\Gamma_2$. Lemma~\ref{lem:serrin-corner} gives the same contradiction as in
the proof of Theorem~\ref{thm:main}. Hence orthogonal contact is impossible
unless $w\equiv0$.

\subsection{Completion of the proof}

For every direction, Lemma~\ref{lem:annular-continuation} and the two contact
contradictions force equality on a critical cap component;
Lemma~\ref{lem:propagation} then yields an exact symmetry of $\Omega$ and $u$.
Every reflection preserves the outer and inner boundary components separately:
they have different topological roles, and they also carry the distinct
Dirichlet values $a$ and $b$. All symmetry planes fix the barycenter $x_0$ of
$\Omega$, so they pass through $x_0$ and generate the full orthogonal group
about that point. Consequently, both boundary components are round spheres
centered at $x_0$. Since $\Gamma_1$ is outer and $\Gamma_2$ is inner, there
exist $0<r<R$ such that
\[
\Gamma_1=\partial B_R(x_0),
\qquad
\Gamma_2=\partial B_r(x_0),
\]
and hence
\[
\Omega=B_R(x_0)\setminus\overline{B_r(x_0)}.
\]
The same reflections show that $u$ is radial.

For completeness, if $u(x)=U(\rho)$, $\rho=|x-x_0|$, and $p=U'$, then the
boundary conditions reduce to
\[
p(R)=C_1-\frac{c_1}{R}<0,
\qquad
-p(r)=C_2+\frac{c_2}{r}>0.
\]
Thus every solution necessarily satisfies
\[
C_1<\frac{c_1}{R},
\qquad
C_2>-\frac{c_2}{r}.
\]
No relation between the constants on the two different boundary components is
needed in the moving-plane argument: only $c_i\geq0$ is used at a terminal
contact on $\Gamma_i$. This proves Theorem~\ref{thm:shell}.

\begin{remark}\label{rem:twoheight-necessity}
The hypotheses $a<b$ and $a<u<b$ are structural rather than cosmetic. The
translator equation rules out an interior minimum but does not rule out an
interior maximum, so the upper bound $u<b$ does not follow automatically from
the PDE. Without the strict ordering, the standard annular moving-plane
continuation may fail, just as it does in classical ring-shaped Serrin
problems with nonmonotone model profiles.
\end{remark}

\begin{remark}
Under the opposite convention $H_{\partial B_R}=+1/R$, the componentwise sign
assumptions become $c_1\leq0$ and $c_2\leq0$. The corresponding radial
compatibility inequalities are obtained by reversing the signs of the
curvature terms.
\end{remark}

\section{Sharpness, exact counterexamples, and failure modes}
\label{sec:counterexamples}

The hypotheses in the two rigidity theorems play different roles.  Some of
 them are genuinely sharp and can be tested by exact solutions of the radial
 translator ODE.  Others are topological assumptions needed to formulate a
 connected shell problem.  We distinguish these issues carefully.

\subsection{A connected annular counterexample when \bf{$\Gamma<0$}}

The sign condition in Theorem~\ref{thm:main} cannot be deleted.  In fact, the
 radial ODE produces a large family of smooth connected counterexamples.

\begin{proposition}[Negative-coupling annular counterexamples]
\label{prop:negative-gamma-counterexample}
Fix $n\geq2$, an inner radius $r>0$, and a number $\alpha>0$.  Let $p$ be the
solution of
\begin{equation}\label{eq:counterexample-p-ivp}
 p'=-(1+p^2)\left(1+\frac{n-1}{\rho}p\right),
 \qquad p(r)=\alpha.
\end{equation}
Then there exist a unique radius $R>r$, a smooth radial function
$u(x)=U(|x|)$ on
\[
 A_{r,R}=B_R\setminus\overline{B_r},
\]
and constants $a,C\in\mathbb{R}$ and $\Gamma<0$ such that
\begin{equation}\label{eq:negative-gamma-counterexample-problem}
\begin{cases}
-\displaystyle\diver\left(\dfrac{Du}{\sqrt{1+|Du|^2}}\right)
=\displaystyle\dfrac1{\sqrt{1+|Du|^2}}
&\text{in }A_{r,R},\\[3mm]
 u=a&\text{on }\partial A_{r,R},\\
 \partial_\nu u=\Gamma H+C&\text{on }\partial A_{r,R}.
\end{cases}
\end{equation}
Consequently, the ball conclusion in Theorem~\ref{thm:main} is false if the
assumption $\Gamma\geq0$ is removed.
\end{proposition}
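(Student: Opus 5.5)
Starting from the initial value $p(r)=\alpha>0$, I will run the radial ODE \eqref{eq:counterexample-p-ivp} forward, show that the profile $U(\rho)=a+\int_r^\rho p$ returns to the height $a$ at some first radius $R>r$, and then fit $\Gamma$ and $C$ to the two normal derivatives. The whole construction is a reversal of Section~\ref{sec:annulus}.

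\textbf{Step 1 (qualitative behaviour of $p$).} The right-hand side of \eqref{eq:counterexample-p-ivp} is smooth for $\rho>0$, so there is a unique local solution. Set $h=p+\rho/(n-1)$; then $h(r)>0$. The first-crossing argument of Lemma~\ref{lem:pmonotone} shows that $h>0$ on the whole existence interval. Hence $p'<0$ there, and $p>-\rho/(n-1)$ gives a lower bound. Since $p$ is decreasing, it is also bounded above by $\alpha$. So $p$ stays bounded on compact subsets of $(0,\infty)$ and exists for all $\rho\geq r$.

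\textbf{Step 2 ($p$ becomes negative).} While $p\geq0$ we have $p'\leq-1$, so $p$ reaches zero at some $c\leq r+\alpha$. Afterwards $p'<0$, so $p<0$ on $(c,\infty)$.

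\textbf{Step 3 (the return radius).} I claim that $\int_c^\rho p\to-\infty$. After $c$, the solution cannot converge to a finite limit $L<0$: if it did, $p'\to-(1+L^2)\neq0$, which is impossible. Since $p$ is decreasing and negative, it therefore stays below $p(c+1)<0$, and the integral diverges. This is the main point to check, and the monotone bound makes it easy. Consequently $U(\rho)=a+\int_r^\rho p$ first increases and then decreases, and there is a unique $R>c$ with $U(R)=a$; uniqueness follows from strict monotonicity of $U$ on $(c,\infty)$. Moreover $U>a$ on $(r,R)$.

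\textbf{Step 4 (the solution and the constants).} Put $u(x)=U(|x|)$ on $A_{r,R}$. It is smooth and satisfies the equation, because \eqref{eq:pODE} is equivalent to \eqref{eq:radial-div}, and $u=a$ on both spheres. With $\beta=-p(R)>0$, the outer normal derivatives are $-\alpha$ on the inner sphere, where $H=1/r$, and $-\beta$ on the outer sphere, where $H=-1/R$. The boundary law requires
\[
\Gamma\left(\tfrac1r+\tfrac1R\right)=\beta-\alpha ,
\qquad
C=-\beta+\tfrac{\Gamma}{R},
\]
a linear system that is always uniquely solvable. The argument leading to \eqref{eq:alpha-beta} uses only the ODE, Lemma~\ref{lem:pmonotone} and the equal boundary heights, all of which now hold. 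It gives $\alpha>\beta$, and therefore $\Gamma<0$.

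Since $A_{r,R}$ is connected and not a ball, the conclusion of Theorem~\ref{thm:main} fails for these data.
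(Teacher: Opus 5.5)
Your proposal is correct and follows the paper's proof essentially step by step. Both arguments obtain global existence from the first-crossing argument of Lemma~\ref{lem:pmonotone}, get a zero $c$ of $p$ from $p'\le -1$, let $\int p$ diverge to $-\infty$ to produce a unique return radius $R$, and derive $\alpha>\beta$, hence $\Gamma<0$, from the area comparison. The only deviation is in Step~3. There the monotone bound $p\le p(c+1)<0$ on $[c+1,\infty)$ already forces divergence, so neither your non-convergence remark nor the paper's stronger conclusion $p\to-\infty$ is actually needed.
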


\begin{proof}
The argument in Lemma~\ref{lem:pmonotone}, applied on every finite interval on
which the solution exists, shows that
\[
 p(\rho)>-\frac{\rho}{n-1}
 \quad\text{and}\quad
 p'(\rho)<0.
\]
Hence $p$ cannot blow up on a finite interval and exists for every
$\rho\geq r$.  While $p\geq0$, equation
\eqref{eq:counterexample-p-ivp} gives $p'\leq-1$, so $p$ has a unique zero
$c>r$.  Since $p$ is strictly decreasing, it is negative after $c$.
Moreover, $p$ cannot converge to a finite limit as $\rho\to\infty$: if
$p(\rho)\to L\leq0$, then the right-hand side of
\eqref{eq:counterexample-p-ivp} converges to $-(1+L^2)<0$, which contradicts
convergence.  Thus
\[
 p(\rho)\longrightarrow-\infty.
\]

Set
\[
 U(\rho)=a+\int_r^\rho p(s)\,\dd s,
\]
where $a$ is arbitrary.  The function $U$ first increases and then tends to
$-\infty$.  Therefore there is a unique $R>c$ such that
\[
 U(R)=U(r)=a.
\]
Write
\[
 \beta=-p(R)>0.
\]
The area comparison in
\eqref{eq:density-compare}--\eqref{eq:alpha-beta}, which applies because the
endpoint values of $U$ agree, gives
\[
 \alpha>\beta.
\]

The outer normal derivatives of the shell are
\[
 q_{\rm in}=-p(r)=-\alpha,
 \qquad
 q_{\rm out}=p(R)=-\beta,
\]
and the normalized mean curvatures are
\[
 H_{\rm in}=\frac1r,
 \qquad
 H_{\rm out}=-\frac1R.
\]
Define
\begin{equation}\label{eq:negative-gamma-definition}
 \Gamma
 =\frac{q_{\rm in}-q_{\rm out}}
 {\dfrac1r+\dfrac1R}
 =\frac{\beta-\alpha}
 {\dfrac1r+\dfrac1R}<0
\end{equation}
and
\begin{equation}\label{eq:negative-C-definition}
 C=q_{\rm in}-\frac{\Gamma}{r}
   =q_{\rm out}+\frac{\Gamma}{R}.
\end{equation}
Then $q=\Gamma H+C$ on both boundary components.  Equation
\eqref{eq:counterexample-p-ivp} is exactly the radial form of the translator
 equation, so $u(x)=U(|x|)$ satisfies
\eqref{eq:negative-gamma-counterexample-problem}.  Since $r>0$, the domain is
not a ball.
\end{proof}

\begin{remark}
Proposition~\ref{prop:negative-gamma-counterexample} proves sharpness of the
sign condition in the one-height theorem in the strongest elementary sense:
the counterexamples are smooth, bounded, connected, and rotationally
symmetric.  The failure is therefore not caused by irregularity or by a loss
of connectedness, but by the sign of the curvature response itself.
\end{remark}

\subsection{The strict two-height ordering is not automatic}

The condition $a<u<b$ in Theorem~\ref{thm:shell} is not a consequence of the
translator equation and the two boundary values.  The following radial family
shows this directly.  It is not a counterexample to the spherical-shell
conclusion, since the underlying domain is already a shell; rather, it proves
that the ordering hypothesis is logically independent of the remaining data.

\begin{proposition}[Nonmonotone two-height radial solutions]
\label{prop:nonmonotone-shell}
Fix $r>0$ and $\alpha>0$, and let $p$ solve
\eqref{eq:counterexample-p-ivp}.  There exists $R>r$ such that, with
\[
 U(\rho)=\int_r^\rho p(s)\,\dd s,
 \qquad
 b=U(r)=0,
 \qquad
 a=U(R)<0,
\]
the shell $A_{r,R}$ admits a radial solution of the translator equation with
\[
 u=a\quad\text{on }\partial B_R,
 \qquad
 u=b\quad\text{on }\partial B_r,
\]
and with a boundary law
\[
 \partial_\nu u=\Gamma H+C
\]
for constants $\Gamma>0$ and $C\in\mathbb{R}$.  Nevertheless,
$
 u>b
$
near the inner boundary, so the strict ordering $a<u<b$ fails.
\end{proposition}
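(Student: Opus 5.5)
The plan is to reuse the global analysis of the initial value problem \eqref{eq:counterexample-p-ivp} from the proof of Proposition~\ref{prop:negative-gamma-counterexample}, but to choose the outer radius $R$ \emph{much larger} than the first return radius, so that the outer slope exceeds the inner slope in absolute value. From that proof I take the following facts. The solution $p$ exists for all $\rho\geq r$. It is strictly decreasing and has a unique zero $c>r$. It satisfies $p(\rho)\to-\infty$ as $\rho\to\infty$. Consequently $U(\rho)=\int_r^\rho p$ increases strictly on $[r,c]$, decreases strictly on $[c,\infty)$, and tends to $-\infty$. Since $r>0$, the function $u(x)=U(|x|)$ is smooth on every closed shell $\overline{A_{r,R}}$. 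Because \eqref{eq:counterexample-p-ivp} is the radial form \eqref{eq:pODE} of \eqref{eq:translator}, $u$ solves the translator equation there.

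Next I fix $R$. Let $R_0>c$ be the unique point with $U(R_0)=0$. Because $p(\rho)\to-\infty$, I can choose $R>R_0$ with $-p(R)>\alpha$. Then $a=U(R)<0=b$, since $U$ is strictly decreasing past $c$. The Dirichlet data are therefore $u=a$ on $\partial B_R$ and $u=b$ on $\partial B_r$, with $a<b$ as required.

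For the boundary law I compute as in Section~\ref{sec:annulus}. On the inner sphere, $q_{\rm in}=-p(r)=-\alpha$ and $H_{\rm in}=1/r$. On the outer sphere, $q_{\rm out}=p(R)$ and $H_{\rm out}=-1/R$. A single law $q=\Gamma H+C$ on two spheres is a system of two linear equations in $(\Gamma,C)$ with nonzero determinant $1/r+1/R$. Its solution is
\[
\Gamma=\frac{q_{\rm in}-q_{\rm out}}{\frac1r+\frac1R}=\frac{-p(R)-\alpha}{\frac1r+\frac1R},
\qquad
C=q_{\rm in}-\frac{\Gamma}{r}.
\]
By the choice of $R$ we have $-p(R)-\alpha>0$, so $\Gamma>0$.

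Finally, $p(r)=\alpha>0$ gives $U(\rho)>U(r)=b$ for $\rho\in(r,c]$. Hence $u>b$ in a neighborhood of the inner boundary, and $a<u<b$ fails.

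The only step needing care is the existence of $R$ with both $U(R)<0$ and $-p(R)>\alpha$. This follows at once from $p\to-\infty$ together with the monotonicity of $U$ past $c$, so I expect no real obstacle. It is worth noting the contrast with Proposition~\ref{prop:negative-gamma-counterexample}: stopping at the first return $R_0$ would force $\Gamma<0$ by \eqref{eq:alpha-beta}, and it is exactly the freedom of taking unequal heights that lets $R$ be pushed far enough to make $\Gamma$ positive.
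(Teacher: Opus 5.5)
Your proposal is correct and follows essentially the same route as the paper. You take the global behavior of $p$ from Proposition~\ref{prop:negative-gamma-counterexample} and choose $R$ large enough that $U(R)<0$ and $-p(R)>\alpha$. You then solve the two-sphere system to get $\Gamma=(-p(R)-\alpha)/(\frac1r+\frac1R)>0$ with $C=q_{\rm in}-\Gamma/r$, and conclude $u>b$ near $\partial B_r$ from $p(r)=\alpha>0$.
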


\begin{proof}
As shown above, $p$ is strictly decreasing and tends to $-\infty$, whereas
$U(\rho)\to-\infty$.  Choose $R$ so large that
\[
 p(R)<-\alpha
 \quad\text{and}\quad
 U(R)<U(r)=0.
\]
Put $\beta=-p(R)>\alpha$.  The boundary normal derivatives are
\[
 q_{\rm in}=-\alpha,
 \qquad
 q_{\rm out}=-\beta.
\]
Define
\begin{equation}\label{eq:positive-gamma-nonmonotone}
 \Gamma
 =\frac{q_{\rm in}-q_{\rm out}}
 {\dfrac1r+\dfrac1R}
 =\frac{\beta-\alpha}
 {\dfrac1r+\dfrac1R}>0
\end{equation}
and
\[
 C=q_{\rm in}-\frac{\Gamma}{r}
  =q_{\rm out}+\frac{\Gamma}{R}.
\]
Then the curvature-coupled boundary law holds on both spheres.  Since
$p(r)=\alpha>0$, one has
\[
 U(\rho)>U(r)=b
\]
for $\rho>r$ sufficiently close to $r$.  Hence the upper inequality in
$a<u<b$ fails.
\end{proof}

\subsection{A disconnected counterexample to the unqualified two-component statement}

The original informal statement ``$\partial\Omega=\Gamma_1\cup\Gamma_2$ with
$\Gamma_1\cap\Gamma_2=\varnothing$'' does not by itself say that $\Omega$ is
connected or that one component surrounds the other.  Without that topology,
the shell conclusion is false even for $\Gamma=0$.

\begin{proposition}[Two disjoint congruent balls]
\label{prop:disconnected-counterexample}
Let $R>0$ and choose $x_1,x_2\in\mathbb{R}^n$ with
$|x_1-x_2|>2R$.  There are constants $a<b$ and $C<0$ and a smooth function
$u$ on
\[
 \Omega=B_R(x_1)\mathbin{\dot\cup}B_R(x_2)
\]
such that
\begin{equation}\label{eq:disconnected-problem}
\begin{cases}
-\displaystyle\diver\left(\dfrac{Du}{\sqrt{1+|Du|^2}}\right)
=\displaystyle\dfrac1{\sqrt{1+|Du|^2}}
&\text{in }\Omega,\\[3mm]
 u=a \ \text{on }\Gamma_1:=\partial B_R(x_1),\ \ \
 u=b&\text{on }\Gamma_2:=\partial B_R(x_2),\\
 \partial_\nu u=C&\text{on }\partial\Omega.
\end{cases}
\end{equation}
Thus \eqref{eq:disconnected-problem} has the form of the two-height problem
with $\Gamma=0\geq0$, but $\Omega$ is not a spherical shell.
\end{proposition}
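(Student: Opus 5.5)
Since the two balls are disjoint, we solve an ordinary one-ball problem on each of them and then shift one solution vertically. Let $U(\rho)$ be the regular radial translator profile on $B_R$. It is obtained from \eqref{eq:pODE} with $p(0)=0$, where regularity at the origin means $p(\rho)\sim-\rho/n$ as $\rho\to0$. Set $U(\rho)=\int_\rho^R(-p(s))\dd s$, so that $U(R)=0$.

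\emph{Step 1: existence and sign of the profile.} Existence of a solution to the singular initial value problem on $[0,R]$ follows by the standard fixed-point argument for the integral form of \eqref{eq:radial-div}:
\[
\rho^{n-1}\frac{p}{\sqrt{1+p^2}}=-\int_0^\rho\frac{s^{n-1}}{\sqrt{1+p(s)^2}}\dd s .
\]
This identity gives $p<0$ on $(0,R]$. It also gives $|p|/\sqrt{1+p^2}\le\rho/n$. Hence if $R<n$ there is no blow-up, and $p(R)<0$ is finite.

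\emph{Step 2: global existence for large $R$.} For general $R$ we need global existence of $p$. The argument of Lemma~\ref{lem:pmonotone} applies: the quantity $h=p+\rho/(n-1)$ stays positive. Once $\rho>0$ we have $p'<0$, and $p$ cannot reach $-\infty$ in finite $\rho$ because $p>-\rho/(n-1)$. Thus $p$ exists on all of $[0,\infty)$, and in particular $U\in C^\infty(\overline{B_R})$. Smoothness at the origin holds because $p$ is odd-extendable. This uses the same analytic ODE input as Proposition~\ref{prop:negative-gamma-counterexample}, and it is the only place where care is needed.

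\emph{Step 3: the construction.} Put $C=p(R)<0$. Fix any $a<b$, for instance $a=0$ and $b=1$. Define
\[
u(x)=a+U(|x-x_1|)\ \text{on }B_R(x_1),\qquad u(x)=b+U(|x-x_2|)\ \text{on }B_R(x_2).
\]
Since the domains are disjoint, $u$ is smooth on $\Omega$. Adding a constant preserves the translator equation, because the equation involves only $Du$ and $D^2u$. On each sphere the outward normal derivative is $U'(R)=p(R)=C$, so $\partial_\nu u=C=0\cdot H+C$ on all of $\partial\Omega$. The Dirichlet data equal $a$ on $\Gamma_1$ and $b$ on $\Gamma_2$.

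\emph{Step 4: conclusion.} $\Omega$ is disconnected, so it is not a spherical shell. The hypotheses $c_1=c_2=\Gamma=0\ge0$ and $a<b$ hold, but the nested connected topology required in Theorem~\ref{thm:shell} fails.
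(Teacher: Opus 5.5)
Your construction is the same as the paper's. You take the regular radial translator profile on $B_R$ normalized to vanish at $\rho=R$, shift it by $a$ and $b$ on the two balls, and use the equal radii to get a common boundary slope, so $C=p(R)<0$ and $\Gamma=0$. The only difference is that you also justify existence, the negative sign of the slope, and global existence of the regular profile (via the integral identity and the $h=p+\rho/(n-1)$ argument of Lemma~\ref{lem:pmonotone}), which the paper takes for granted.
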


\begin{proof}
Let $V_R$ be the regular radial translator profile on $B_R$, normalized by
$V_R(R)=0$.  For the regular radial translator profile on a ball,
\[
 V_R'(R)=:q_R<0.
\]
Define
\[
 u(x)=
 \begin{cases}
 V_R(|x-x_1|)+a,&x\in B_R(x_1),\\
 V_R(|x-x_2|)+b,&x\in B_R(x_2).
 \end{cases}
\]
The translator equation is invariant under vertical addition of constants, so
$u$ solves the equation on both connected components.  The two balls have the
same radius and hence the same boundary slope $q_R$.  Taking
\[
 C=q_R,
 \qquad
 \Gamma=0,
\]
gives the required Neumann law on the whole boundary.  The domain is a
disjoint union, not a connected annular region.
\end{proof}

\begin{remark}[What is and is not disproved]
Proposition~\ref{prop:disconnected-counterexample} shows that connectedness and
the nested outer--inner geometry in Theorem~\ref{thm:shell} are indispensable
for the formulation.  Proposition~\ref{prop:nonmonotone-shell} shows that the
strict ordering is not automatic.  It does \emph{not}, however, produce a
connected non-spherical counterexample after only the ordering hypothesis is
removed.  No such counterexample for the present translator boundary law is
claimed here.

For comparison, in the classical Laplace torsion problem, Kamburov and
Sciaraffia constructed smooth nonradial doubly connected domains bifurcating
from annuli when the inner-boundary monotonicity sign is reversed
\cite{KamburovSciaraffia2021}.  Their result concerns a different operator and
different Neumann data, so it is evidence for the structural importance of
monotonicity, not a counterexample to the translator theorems proved in this
paper.
\end{remark}

\medskip
\noindent
{\small{\bf{Acknowledgments:}} {Part of this work was done during our visit to Chern institute of mathematics in Nankai University in the summer of 2026. Li Ma would like to express his gratitude to Prof. W.P.Zhang in Chern institute of mathematics at Nankai University.}}

\end{document}